%
%
\documentclass[12pt]{amsart}
\usepackage{amsmath,amssymb,amsbsy,amsfonts,amsthm,latexsym,
                        amsopn,amstext,amsxtra,euscript,amscd,mathrsfs,color}

\usepackage{float}
\usepackage[english]{babel}
\usepackage{url}
\usepackage[colorlinks,linkcolor=blue,anchorcolor=blue,citecolor=blue]{hyperref}

\begin{document}

\newtheorem{theorem}{Theorem}
\newtheorem{lemma}[theorem]{Lemma}
\newtheorem{example}[theorem]{Example}
\newtheorem{algol}{Algorithm}
\newtheorem{corollary}[theorem]{Corollary}
\newtheorem{prop}[theorem]{Proposition}
\newtheorem{defin}[theorem]{Definition}
\newtheorem{question}[theorem]{Question}
\newtheorem{problem}[theorem]{Problem}
\newtheorem{remark}[theorem]{Remark}
\newtheorem{conjecture}[theorem]{Conjecture}

\newcommand{\comm}[1]{\marginpar{%
\vskip-\baselineskip 
\raggedright\footnotesize
\itshape\hrule\smallskip#1\par\smallskip\hrule}}


\def\cA{{\mathcal A}}
\def\cB{{\mathcal B}}
\def\cC{{\mathcal C}}
\def\cD{{\mathcal D}}
\def\cE{{\mathcal E}}
\def\cF{{\mathcal F}}
\def\cG{{\mathcal G}}
\def\cH{{\mathcal H}}
\def\cI{{\mathcal I}}
\def\cJ{{\mathcal J}}
\def\cK{{\mathcal K}}
\def\cL{{\mathcal L}}
\def\cM{{\mathcal M}}
\def\cN{{\mathcal N}}
\def\cO{{\mathcal O}}
\def\cP{{\mathcal P}}
\def\cQ{{\mathcal Q}}
\def\cR{{\mathcal R}}
\def\cS{{\mathcal S}}
\def\cT{{\mathcal T}}
\def\cU{{\mathcal U}}
\def\cV{{\mathcal V}}
\def\cW{{\mathcal W}}
\def\cX{{\mathcal X}}
\def\cY{{\mathcal Y}}
\def\cZ{{\mathcal Z}}

\def\C{\mathbb{C}}
\def\F{\mathbb{F}}
\def\K{\mathbb{K}}
\def\Z{\mathbb{Z}}
\def\R{\mathbb{R}}
\def\Q{\mathbb{Q}}
\def\N{\mathbb{N}}
\def\M{\textsf{M}}
\def\PP{\mathbb{P}}
\def\A{\mathbb{A}}
\def\p{\mathfrak{p}}
\def\n{\mathfrak{n}}
\def\X{\mathcal{X}}
\def\x{\textrm{\bf x}}
\def\w{\textrm{\bf w}}

\def\({\left(}
\def\){\right)}
\def\[{\left[}
\def\]{\right]}
\def\<{\langle}
\def\>{\rangle}

\def\gen#1{{\left\langle#1\right\rangle}}
\def\genp#1{{\left\langle#1\right\rangle}_p}
\def\genPs{{\left\langle P_1, \ldots, P_s\right\rangle}}
\def\genPsp{{\left\langle P_1, \ldots, P_s\right\rangle}_p}

\def\e{e}

\def\eq{\e_q}
\def\fh{{\mathfrak h}}

\def\lcm{{\mathrm{lcm}}\,}

\def\fl#1{\left\lfloor#1\right\rfloor}
\def\rf#1{\left\lceil#1\right\rceil}
\def\mand{\qquad\mbox{and}\qquad}

\def\jt{\tilde\jmath}
\def\ellmax{\ell_{\rm max}}
\def\llog{\log\log}

\def\ch{\hat{h}}
\def\GL{{\rm GL}}
\def\Orb{\mathrm{Orb}}
\def \S{\mathcal{S}}
\def\vec#1{\mathbf{#1}}
\def\ov#1{{\overline{#1}}}
\def\Gal{{\rm Gal}}

\numberwithin{equation}{section}
\numberwithin{theorem}{section}

\title[Quantitative dynamical Mordell-Lang conjecture]{On the quantitative dynamical Mordell-Lang conjecture}

\author{Alina Ostafe}
\address{School of Mathematics and Statistics, University of New South Wales, Sydney NSW 2052, Australia}
\email{alina.ostafe@unsw.edu.au}

\author{Min Sha}
\address{School of Mathematics and Statistics, University of New South Wales, Sydney NSW 2052, Australia}
\email{shamin2010@gmail.com}

\subjclass[2010]{Primary 37P55; Secondary 11B37, 11D61, 11D72}
\keywords{Dynamical Mordell-Lang conjecture, linear recurrence sequence, exponential polynomial, linear equation}

\begin{abstract}
The dynamical Mordell-Lang conjecture concerns the structure of the intersection of an orbit in an algebraic dynamical system and an algebraic variety. In this paper, we bound the size of this intersection for various cases when it is finite.
\end{abstract}

\maketitle

\section{Introduction}

\subsection{Motivation}

Let $\X$ be an algebraic variety defined over the complex numbers $\C$, and let $\Phi: \X \to \X$ be a morphism. For any integer $n\ge 0$, we denote by $\Phi^{(n)}$ the $n$-th iteration of $\Phi$ with $\Phi^{(0)}$ denoting the identity map.

Throughout the paper, a single integer is viewed as an arithmetic progression with common difference 0.

The following is the well-known \textit{dynamical Mordell-Lang conjecture} for self-morphisms of algebraic varieties in the dynamical setting; see \cite{Denis,GT,GTZ1}.

\begin{conjecture}[Dynamical Mordell-Lang Conjecture]
\label{ML}
Let $\X$ and $\Phi$ be given as the above, let $V\subseteq \X$ be a closed  subvariety, and let $P\in \X(\C)$. Then, the following subset of integers
$$
\{n\ge 0: \Phi^{(n)}(P)\in V(\C)\}
$$
is a finite union of arithmetic progressions.
\end{conjecture}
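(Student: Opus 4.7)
\bigskip

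\noindent\textbf{Proof plan (for the conjecture itself).}

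The standard strategy, going back to the Skolem--Mahler--Lech theorem and developed in the dynamical setting by Bell, Ghioca and Tucker, is to reduce to a $p$-adic interpolation problem and then invoke Strassman's theorem. The plan is first to reduce the statement to a single polynomial equation. Since $V$ is cut out locally by finitely many regular functions $F_1,\ldots,F_m$, and since a finite intersection of finite unions of arithmetic progressions is again a finite union of arithmetic progressions, it suffices to prove that for each such $F$ the set
\[
S_F = \{n \ge 0 : F(\Phi^{(n)}(P)) = 0\}
\]
is a finite union of arithmetic progressions. By a spreading-out argument one may assume $\X$, $\Phi$, $V$ and $P$ are defined over a finitely generated $\Z$-subalgebra $R$ of $\C$, and then specialize to a residue field of $R$ at a suitable prime $\p$ where $\Phi$ and $P$ have good reduction.

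The heart of the argument is the construction of a \emph{$p$-adic analytic arc}. After passing to the completion at $\p$, one works over $\C_p$ for a suitable prime $p$. The finiteness of the residue field forces the forward orbit $\{\Phi^{(n)}(P) \bmod \p\}$ to be eventually periodic; replacing $P$ by $\Phi^{(n_0)}(P)$, one obtains an integer $N \ge 1$ such that, for each residue class $r \in \{0,1,\ldots,N-1\}$, all iterates $\Phi^{(Nk+r)}(P)$ lie in a common $p$-adic residue disk $D_r \subseteq \X(\C_p)$. On each such disk $\Phi^{(N)}$ acts as a contracting map sufficiently close to the identity, and one seeks a $p$-adic analytic function
\[
\gamma_r : \Z_p \longrightarrow \X(\C_p), \qquad \gamma_r(k) = \Phi^{(Nk+r)}(P) \ \text{ for all } k \in \Z_{\ge 0}.
\]
Granting the existence of $\gamma_r$, the composition $g_r(t) := F(\gamma_r(t))$ is a $p$-adic analytic function on $\Z_p$, and Strassman's theorem gives a dichotomy: either $g_r \equiv 0$, in which case the entire arithmetic progression $\{Nk+r : k \ge 0\}$ lies in $S_F$, or $g_r$ has only finitely many zeros in $\Z_p$, contributing only finitely many elements to $S_F$. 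Summing over the $N$ residue classes yields the desired structure of $S_F$.

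The main obstacle is, of course, the construction of the analytic arc $\gamma_r$, and this is precisely where the conjecture remains open in full generality. When $\Phi$ is étale at $P$, the Bell--Ghioca--Tucker $p$-adic arc lemma produces $\gamma_r$ directly by Hensel-type lifting on formal coordinates. For unramified or, more generally, for suitably non-degenerate maps one can use refinements due to Poonen and to Xie to control the logarithm of the Jacobian and obtain convergent power series representing $\gamma_r$. In the general (possibly ramified) case, the construction of $\gamma_r$ is known to fail at points where all eigenvalues of $d\Phi_P$ are roots of unity with nontrivial nilpotent part, and bypassing this obstruction — either by a clever change of variable, by blowing up along the indeterminacy locus to reduce to the étale case, or by a direct combinatorial analysis of the ramified dynamics — is the principal difficulty that any complete proof of Conjecture~\ref{ML} must overcome.
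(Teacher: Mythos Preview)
The statement you are addressing is labeled as a \emph{conjecture} in the paper, and the paper does not attempt to prove it; it merely states Conjecture~\ref{ML} as motivation and then proves quantitative bounds in several special cases (diagonal and monomial maps on affine space). So there is no ``paper's own proof'' to compare your proposal against.

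Your write-up is a fair summary of the Bell--Ghioca--Tucker $p$-adic arc strategy, and you are honest that the construction of the analytic interpolation $\gamma_r$ is exactly where the argument breaks down in general. But that means what you have submitted is not a proof: you have reduced the conjecture to another open problem (the existence of the $p$-adic arc for arbitrary morphisms) and then stopped. The passage beginning ``Granting the existence of $\gamma_r$\ldots'' is the entire content of the difficulty, and your final paragraph explicitly concedes that overcoming the ramified case ``is the principal difficulty that any complete proof\ldots must overcome.'' A proof plan that ends by naming the obstruction it cannot remove is an exposition of the state of the art, not a proof. If your intent was only to sketch the known approach and its limits, that is fine as commentary, but it should not be presented as a proof of the conjecture.
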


Conjecture \ref{ML} has been studied extensively in recent years. However, so far there are only a few related results.
These include results on maps of various special types  \cite{Bell,BGT,BGKT,Fak,GT,GTZ1,SiVi,Xie} (especially diagonal maps), and analogues for Noetherian spaces \cite{BGT14} and Drinfeld modules \cite{GT2008}.

Recently, Silverman and Viray~\cite[Corollary 1.4]{SiVi} have given results regarding the uniform boundedness (only in terms of $m$) of intersections of orbits of the power map (with the same exponent) at a point of the projective $m$-space $\PP^m(\C)$ with non-zero multiplicatively independent coordinates, with any linear subspace of $\PP^m(\C)$. However, they have not provided quantitative results.
In fact, such a result follows, even in a more general case, directly from the uniform bound on the number of zeros of simple and non-degenerate linear recurrence sequences.

We also note that the uniform boundedness condition has recently been  considered in~\cite{DOSS}, where several results are given for the frequency of the points in an
orbit  of an algebraic dynamical system that belong to a given
  algebraic variety under the reduction modulo a prime $p$.

\subsection{Our Results}
In this paper, we study the quantitative version of Conjecture \ref{ML} for polynomial morphisms of several special types when $\X$ is the affine $m$-space $\A^m(\C)$ and $V$ is a hypersurface; see Section \ref{sec:quan}.  Our main objective is to find as many classes of polynomial morphisms as possible having uniform bounds (or as close as possible to uniformity), and not to investigate detailedly the quality of the bounds.
To the best of our knowledge, this is the first work on the quantitative dynamical Mordell-Lang conjecture.

Here, we extend the results of Silverman and Viray~\cite{SiVi} in two directions. First, we consider monomial systems with different exponents. Second, we estimate the size of the intersection of an orbit with a hypersurface rather than with a hyperplane.

For example, we illustrate a typical case of our results; see Theorem \ref{thm:power d} for more details.
Let $\Phi=\(X_1^d,\ldots,X_m^d\)$ with integer $d\ge 2$ be a diagonal endomorphism of $\A^m(\C)$. Fix a hypersurface $V$, defined by a non-zero polynomial
$$
G=\sum_{i_1,\ldots,i_m}a_{i_1,\ldots,i_m}X_1^{i_1}\cdots X_m^{i_m}\in\C[X_1,\ldots,X_m].
$$
Then, for any $\w\in(\C^*)^m$ with multiplicatively independent coordinates,  the size of the intersection of $V$ and the orbit of $\Phi$ at the point $\w$ is at most
$$
(8\n(G))^{4\n(G)^5},
$$
where $\n(G)$ is the number of monomials of $G$.

Our methods rely on estimates (when finite) for integer solutions of certain polynomial-exponential equations. For the case of the power map studied by Silverman and Viray~\cite{SiVi} we employ results on the number of zeros in linear recurrence sequences due to~\cite{AV1,AV2} and~\cite{PSc}. For more general monomial systems we use results on the number of solutions in a finitely generated subgroup of $\(\C^*\)^k$ of linear equations of the form $a_1x_1+\ldots+a_kx_k=0$, $a_1,\ldots,a_k\in\C^*$, as well as solutions to more general polynomial-exponential equations due to
Schlickewei and Schmidt~\cite{SchSch}.

In fact, by \cite[Theorem 1.8]{GT} the Dynamical Mordell-Lang Conjecture is known to hold in the cases we consider, because the morphisms can  essentially restrict to endomorphisms of $(\C^*)^m$. Besides, the methods we use might be not applicable on other kinds of morphisms, see Section~\ref{sec:com} for more details.

\subsection{Convention and notation}

\label{sec:notation}

For integer $m\ge 2$, let
\begin{equation*}
\Phi = (F_1,\ldots,F_m):\A^m(\C) \to \A^m(\C), \quad F_1,\ldots,F_m \in \C[X_1, \ldots, X_m],
\end{equation*}
be a morphism defined by a system of $m$ polynomials in $m$ variables over $\C$.  For each
$i=1, \ldots ,m$, we define the $n$-th iteration of the polynomials $F_i$ by the recurrence relation
\begin{equation*}
F_i^{(0)}=X_i, \quad F_i^{(n)}= F_i\(F_1^{(n-1)}, \ldots ,F_m^{(n-1)}\), \quad
n=1,2,\ldots,
\end{equation*}
so that
$$
\Phi^{(n)}=\(F_1^{(n)},\ldots,F_m^{(n)}\).
$$
See~\cite{AnKhr,Schm,Silv1} for a background on
dynamical systems associated with such iterations.

For a vector $\vec{w}=(w_1,\ldots,w_m)\in\C^m$, we denote by
$$
\Orb_{\vec{w}}(\Phi)=\left\{\Phi^{(n)}(\vec{w}): n=0,1,2,\ldots\right\}
$$
the orbit of $\Phi$ at  $\vec{w}$.
For an  algebraic variety $V=Z(G_1,\ldots,G_s)$ defined by the equations  $G_1=\cdots=G_s=0$, $G_i\in\C[X_1,\ldots,X_m]$,
$i=1,\ldots,s$, we consider the elements of the orbit $\Orb_{\vec{w}}(\Phi)$ which fall into $V$ and denote
\begin{equation}
\label{eq:intervar}
\S_{\vec{w}}(\Phi,V)
=\left\{ n\ge 0: \Phi^{(n)}(\vec{w})\in V\right\}.
\end{equation}

We say that the complex numbers $\alpha_1,\ldots,\alpha_n$ are \textit{multiplicatively independent} if all of them are non-zero and there is no non-zero integer vector $(i_1,\ldots,i_n)$ such that $\alpha_1^{i_1}\cdots \alpha_n^{i_n}=1$.

In the sequel, we denote by $|S|$ the cardinality of a finite set $S$. Our objective in this paper is to bound the size of $|\S_{\vec{w}}(\Phi,V)|$ for various cases when it is finite.

Throughout the paper, let $\ov{\Q}$ be the algebraic closure of the rational numbers $\Q$. For any field $K$, we write $K^*$ for the multiplicative group of all the non-zero elements of $K$. For any multiplicative group $\Lambda$ and any integer $k\ge 1$, let $\Lambda^k$ be the direct product consisting of $k$-tuples $\vec{x}=(x_1,\ldots,x_k)$ with $x_i\in \Lambda, 1\le i \le k$. As usual,  the multiplication of the group $\Lambda^k$ is defined by  $\vec{x}\vec{y}=(x_1y_1,\ldots,x_ky_k)$ for any $\vec{x},\vec{y}\in \Lambda^k$.

\section{Preliminaries}

In this section, we gather some results which are used afterwards.

Recall that a linear recurrence sequence (LRS) of order $m\ge 1$ is a sequence $\{u_0,u_1,u_2,\ldots\}$ with elements in $\C$ satisfying a linear relation
\begin{equation}
\label{sequence}
u_{n+m}=a_1u_{n+m-1}+\cdots+a_mu_n \quad (n=0,1,2,\ldots),
\end{equation}
where $a_1,\dots,a_m\in \C$, $a_m\ne 0$ and $u_j \ne 0$ for at least one
 $j$ in the range $0 \le j \le m-1$. We assume that relation \eqref{sequence} is minimal, that is the sequence $\{u_n\}$ does not satisfy a relation of type \eqref{sequence} of smaller length.

The characteristic polynomial of this LRS $\{u_n\}$ is
$$
f(X)=X^m-a_1X^{m-1}-\cdots-a_m=\prod_{i=1}^{k}(X-\alpha_i)^{e_i}  \in \C[X]
$$
with distinct $\alpha_1,\alpha_2,\ldots,\alpha_k$ and $e_i>0$ for $1\le i \le k$. Then, $u_n$ can be expressed as
$$
u_n=\sum_{i=1}^{k}f_i(n)\alpha_i^n,
$$
where $f_i$ is some polynomial of degree $e_i-1$ ($i=1,2,\ldots,k$). We call the sequence $\{u_n\}$ \textit{simple} if $k=m$ (that is $e_1=\cdots =e_m=1$) and \textit{non-degenerate} if $\alpha_i/\alpha_j$ is not a root of unity for any $i\ne j$ with $1\le i,j\le k$.

One fundamental problem of the LRS \eqref{sequence} is to describe the structure or bound the size of the following set
$$
\{n\ge 0: u_n=0\},
$$
which is called the \textit{zero set} of the sequence \eqref{sequence}.
Equivalently, we want to study the integer roots of the exponential polynomial $\sum_{i=1}^{k}f_i(z)\alpha_i^z$.

The well-known Skolem-Mahler-Lech theorem says that the zero set of any LRS is a finite union of arithmetic progressions, and furthermore it is a finite set if the sequence is non-degenerate; for example see \cite[Theorem 2.1]{BK}. There are rich results on the quantitative version of this theorem. Here we restate some results in the setting of exponential polynomials, which are used later on.

In the rest of this section, we fix an exponential polynomial over $\C$
\begin{equation}
\label{pol1}
F(z)=\sum_{i=1}^{k}f_i(z)\alpha_i^z
\end{equation}
with distinct $\alpha_1,\ldots,\alpha_k\in \C^*$, and non-zero $f_i\in \C[z]$ for $1\le i\le k$. We also define
$$m=\deg f_1+\cdots+\deg f_k+k$$
 and denote
$$
\cZ(F)=\{n\in \Z: F(n)=0, n\ge 0\}.
$$
Note that $F(z)$ corresponds to an LRS of order $m$, and the set $\cZ(F)$ is exactly the zero set of the corresponding sequence. Especially, when $f_1,\ldots,f_k$ are constants, $F(z)$ corresponds to a simple LRS.

The following result comes from \cite[Corollary 6.3]{AV1} and \cite[Theorem 1.1]{AV2}.

\begin{lemma}
\label{LRS1}
Let $F(z)$ be given by \eqref{pol1}. Then the set $\cZ(F)$ is the union of at most
$\exp(\exp(70m))$
 arithmetic progressions.
Moreover, if $f_1,\ldots,f_k$ are non-zero constants, then
the set $\cZ(F)$ is the union of at most
$(8m)^{4m^5}$ arithmetic progressions.
\end{lemma}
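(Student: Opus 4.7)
The plan is to view $F(z)$ as the exponential-polynomial form of a linear recurrence sequence (LRS) of order exactly $m$ and then invoke the quantitative Skolem--Mahler--Lech bounds of \cite{AV1,AV2} directly. Concretely, the sequence $u_n := F(n)$, $n \ge 0$, satisfies the linear recurrence with characteristic polynomial $\prod_{i=1}^{k}(X-\alpha_i)^{\deg f_i + 1}$, whose degree is $k + \sum_{i=1}^{k}\deg f_i = m$. Since the $\alpha_i$ are distinct and each $f_i$ is non-zero, no smaller-degree annihilator exists, so the recurrence is minimal and the associated LRS has order exactly $m$; under this correspondence, $\cZ(F)$ is precisely the intersection of the zero set of $\{u_n\}$ with $\Z_{\ge 0}$.

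For the general bound $\exp(\exp(70m))$, I would apply \cite[Corollary 6.3]{AV1} to the LRS $\{u_n\}$: that result bounds the number of arithmetic progressions forming the zero set of an order-$m$ LRS in exactly this shape. Restricting to non-negative indices preserves the count, since the intersection of an arithmetic progression in $\Z$ with $\Z_{\ge 0}$ is either empty or again an arithmetic progression.

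For the simple case in which $f_1,\ldots,f_k$ are non-zero constants, one has $m = k$ and each $\alpha_i$ appears with multiplicity one in the characteristic polynomial, so $\{u_n\}$ is a simple LRS of order $m$. The sharper bound $(8m)^{4m^5}$ then follows by applying \cite[Theorem 1.1]{AV2} to this simple LRS; the same restriction argument transfers it to $\cZ(F)$.

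The only step requiring care is matching the orders and degrees across the two formalisms, but this is immediate from the degree computation above. I expect no genuine mathematical obstacle: the substantive content of both references, namely the reduction from a possibly degenerate LRS to its non-degenerate constituents via a periodic decomposition modulo a period $T$ that kills every root-of-unity ratio $\alpha_i/\alpha_j$, is already absorbed into the cited uniform bounds in \cite{AV1,AV2} and need not be redone here.
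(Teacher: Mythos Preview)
Your proposal is correct and matches the paper's approach exactly: the paper gives no proof for this lemma and simply states that it ``comes from \cite[Corollary~6.3]{AV1} and \cite[Theorem~1.1]{AV2}'', which is precisely the pair of citations you invoke after the standard identification of $F$ with an LRS of order $m$. Your added remarks on minimality and on restricting to $\Z_{\ge 0}$ are routine but harmless elaborations of that identification.
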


Lemma \ref{LRS1} can yield some quantitative results concerning Conjecture \ref{ML}. However, here we are more interested with the case when the subset of integers in Conjecture \ref{ML} is a finite set.

As mentioned above, if $F(z)$ corresponds to a non-degenerate LRS, the set $\cZ(F)$ is in fact a finite set, and furthermore we can bound the cardinality $|\cZ(F)|$. The following result follows from \cite[Corollary 6.3]{AV1} and \cite[Theorem 1.2]{AV2}.

\begin{lemma}
\label{LRS2}
Let $F(z)$ be given by \eqref{pol1}. Suppose that $F(z)$ corresponds to a non-degenerate LRS, and $\deg f_i+1\le a$ for $1\le i \le k$. Then we have
$$
|\cZ(F)|\le \(8k^a\)^{8k^{6a}};
$$
furthermore if $f_1,\ldots,f_k$ are non-zero constants, then
we have
$$
| \cZ(F) | \le (8m)^{4m^5}.
$$
\end{lemma}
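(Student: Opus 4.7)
The plan is to deduce both bounds by directly specializing the quantitative zero-counting estimates of Amoroso and Viada quoted in the statement, so the argument is essentially one of careful bookkeeping rather than of substantive new input. First I would observe that $F(z)$ is the general term of an LRS of order $m = \deg f_1 + \cdots + \deg f_k + k$ (exactly as recalled before Lemma~\ref{LRS1}) with characteristic roots $\alpha_1,\ldots,\alpha_k$ and multiplicities $\deg f_i+1$, and that the non-degeneracy hypothesis of the present lemma coincides verbatim with the condition that $\alpha_i/\alpha_j$ is not a root of unity for $i\ne j$.

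For the second assertion I would treat the constant-coefficient case first, in which the associated LRS is simple of order $k=m$. Here \cite[Corollary 6.3]{AV1} (equivalently \cite[Theorem 1.2]{AV2} specialized to the simple case) provides the explicit uniform estimate $(8m)^{4m^5}$ for the number of integer zeros of any simple non-degenerate LRS of order $m$; after substituting our parameters this is precisely the desired bound.

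For the first assertion I would invoke the polynomial-coefficient version of the same result. Under the hypothesis $\deg f_i+1\le a$ for all $i$, the exponential polynomial $F(z)$ is built from $k$ pairwise distinct bases $\alpha_i$ together with coefficient polynomials of degree at most $a-1$; the bound stated in \cite[Theorem 1.2]{AV2} is phrased exactly in terms of these two parameters and yields $|\cZ(F)|\le (8k^a)^{8k^{6a}}$, as required. No further Diophantine input is needed.

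The principal obstacle is not mathematical but notational: one must verify that the definition of non-degeneracy, the normalization of the bases $\alpha_i$, and the parameter counts $(k,a)$ and $m$ appearing in \cite{AV1,AV2} agree with those used in the present paper so that the inequalities transfer verbatim. Once this matching is carried out, both inequalities follow as immediate specializations of the cited theorems, which is why this lemma can legitimately be presented as a direct consequence of \cite[Corollary 6.3]{AV1} and \cite[Theorem 1.2]{AV2}.
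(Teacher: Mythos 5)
Your proposal matches the paper exactly: the paper offers no proof of Lemma~\ref{LRS2} beyond the single sentence that it ``follows from [Corollary~6.3]\cite{AV1} and [Theorem~1.2]\cite{AV2},'' and your plan is precisely that direct specialization with the appropriate parameter bookkeeping ($k=m$ and simple LRS in the constant case, $\deg f_i + 1 \le a$ in the polynomial case). Nothing further is required.
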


In fact, if there exists some index $i$ such that the ratio $\alpha_i/\alpha_j$ is not a root of unity for any $j\ne i$, then the set $\cZ(F)$ is still a finite set; see \cite[Theorem 2.1 (iii)]{BK}. Here we want to bound $|\cZ(F)|$ in this case by using Lemma \ref{LRS2} and following the arguments in \cite{BK}.

\begin{lemma}
\label{LRS3}
Let $F(z)$ be given by \eqref{pol1}. Let $D$ be the order of the group of roots of unity generated by all those roots of unity which are of the form $\alpha_i/\alpha_j$ for some $1\le i,j \le k$. Suppose that there exists some index $i_0$ such that the ratio $\alpha_{i_0}/\alpha_j$ is not a root of unity for any $j\ne i_0$, and $\deg f_i+1\le a$ for $1\le i \le k$. Then we have
$$
| \cZ(F) | \le D\(8k^a\)^{8k^{6a}}.
$$
\end{lemma}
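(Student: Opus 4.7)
The plan is to partition $\cZ(F)$ according to residues modulo $D$ and, on each arithmetic progression $\{Dn+r : n \ge 0\}$, rewrite $F(Dn+r)$ as an exponential polynomial in $n$ that falls under the hypotheses of Lemma \ref{LRS2}. The point is that raising every base to the $D$-th power collapses exactly the equivalence classes of $\alpha_i$'s that differ by a root of unity, turning the given sequence into a non-degenerate one on each progression.

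Concretely, I would first group $\alpha_1, \ldots, \alpha_k$ into equivalence classes $C_1, \ldots, C_t$ under the relation $\alpha_i \sim \alpha_j \iff \alpha_i/\alpha_j$ is a root of unity, and fix a representative $\beta_l \in C_l$ for each class. By the definition of $D$, $(\alpha_i/\beta_l)^D = 1$ and hence $\alpha_i^D = \beta_l^D$ whenever $\alpha_i \in C_l$. For each fixed $r \in \{0, 1, \ldots, D-1\}$ this allows me to rewrite
$$
F(Dn + r) = \sum_{l = 1}^{t} \Bigl(\sum_{\alpha_i \in C_l} f_i(Dn+r)\alpha_i^r\Bigr) (\beta_l^D)^n = \sum_{l=1}^{t} g_{l,r}(n)\, (\beta_l^D)^n,
$$
where each $g_{l,r}$ is a polynomial in $n$ of degree at most $\max_i \deg f_i \le a-1$.

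To apply Lemma \ref{LRS2} I need to verify that this new expression corresponds to a non-degenerate LRS, i.e.\ that $\beta_l^D/\beta_{l'}^D$ is not a root of unity for $l \ne l'$. If it were, some power of $\beta_l/\beta_{l'}$ would equal $1$, forcing $\beta_l/\beta_{l'}$ itself to be a root of unity and contradicting that $\beta_l$ and $\beta_{l'}$ lie in distinct classes. Moreover, the hypothesis on $\alpha_{i_0}$ guarantees that $\{\alpha_{i_0}\}$ is its own equivalence class, so the corresponding coefficient $g_{l_0,r}(n) = f_{i_0}(Dn+r)\alpha_{i_0}^r$ is not identically zero. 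After discarding any $g_{l,r}$ that vanish identically, I obtain a non-trivial, non-degenerate exponential polynomial in $n$ with at most $k$ distinct bases and polynomial coefficients of degree $\le a-1$. Lemma \ref{LRS2} then gives at most $(8k^a)^{8k^{6a}}$ values of $n \ge 0$ with $F(Dn+r) = 0$, and summing over the $D$ residue classes yields the stated bound.

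The only step requiring genuine care is the non-degeneracy check: without the observation that the group of roots of unity is pure inside $\C^*$, the substitution $z \mapsto Dn+r$ could conceivably create new root-of-unity ratios among the $\beta_l^D$, and the reduction to Lemma \ref{LRS2} would collapse. Everything else — combining terms within each class, keeping track of the degree bound $a$, and counting at most $k$ distinct bases — is routine bookkeeping.
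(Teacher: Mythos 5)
Your proof is correct and follows essentially the same route as the paper: partition the $\alpha_i$ into root-of-unity equivalence classes, restrict $F$ to each arithmetic progression mod $D$ to collapse each class to a single base $\beta_l^D$, check non-degeneracy and non-triviality via the hypothesis on $\alpha_{i_0}$, apply Lemma \ref{LRS2}, and multiply by $D$. If anything, you are slightly more explicit than the paper's sketch, since you spell out the non-degeneracy verification (that $\beta_l^D/\beta_{l'}^D$ cannot be a root of unity) and the observation that $\{\alpha_{i_0}\}$ forms a singleton class.
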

\begin{proof}
We partition $\alpha_1,\ldots,\alpha_k$ into equivalence classes according to the equivalence relation where $b\sim c$ if and only if the ratio $b/c$ is a root of unity. By renumbering, we can assume that $\alpha_1,\ldots,\alpha_s$ are representatives of these equivalence classes. Then, fixing an  integer $b$ with $0\le b <D$, we consider the equation
$$F(b+nD)=0$$
 with integer unknown $n\ge 0$. By the choice of $D$, we can express $F(b+nD)$ as
 $$
 F(b+nD)=\sum_{i=1}^{s}g_i(n)\(\alpha_i^D\)^n
 $$
 for some polynomials $g_i\in \C[z]$ with $\deg g_i+1\le a$. Under the assumption on $\alpha_{i_0}$, there indeed exists some index $j$ such that $g_j\ne 0$. So, using Lemma \ref{LRS2}, we deduce that the cardinality of the set $\{n\ge 0:F(b+nD)=0\}$ is at most $\(8k^a\)^{8k^{6a}}$. The final result follows from the fact that there are $D$ choices of the integer $b$.
\end{proof}

Note that if $F(z)$ corresponds to a non-degenerate sequence, then $D=1$ and Lemma \ref{LRS3} is exactly the first upper bound in Lemma \ref{LRS2}.

Moreover, if $\alpha_1,\ldots,\alpha_k$ are roots of a polynomial $f(X)$ over a number field $K$, then the quantity $D$ can be bounded by
\begin{equation}
\label{Dubickas}
D < \exp \( \(1.05314 +\sqrt{6d}\)\sqrt{m \log(dm)} \),
\end{equation}
where $d=[K:\Q]$ and $m=\deg f \ge 2$; see \cite[Theorem 1]{Dubickas2012}.

Except for studying the set $\cZ(F)$, we also need to estimate the number of integers $n\ge 0$ such that $F(n)$ is equal to a fixed non-zero complex number.

\begin{corollary}
\label{cor:LRS}
Let $F(z)$ be given by \eqref{pol1}. Define $\alpha_{k+1}=1$. Suppose that there exists some index $i_0$ such that the ratio $\alpha_{i_0}/\alpha_j$ is not a root of unity for any $j\ne i_0$ with $1\le j \le k+1$. Let $D$ be the order of the group of roots of unity generated by all those roots of unity which are of the form $\alpha_i/\alpha_j$ for
some $1\le i,j \le k+1$.
Assume that $\deg f_i+1\le a$ for $1\le i \le k$. Then for any $\mu\in \C$ with $\mu \ne 0$, we have
$$
| \{ n\in \Z: F(n)=\mu, n\ge 0\} | \le D\(8(k+1)^a\)^{8(k+1)^{6a}};
$$
furthermore if $F(z)$ corresponds to a non-degenerate LRS, no $\alpha_i$ ($1\le i \le k$) is a root of unity, and $f_1,\ldots,f_r$ are non-zero constants, we have
$$
| \{ n\in \Z: F(n)=\mu, n\ge 0\} | \le (8(m+1))^{4(m+1)^5}.
$$
\end{corollary}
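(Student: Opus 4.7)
The plan is to reduce the equation $F(n) = \mu$ to a vanishing statement for a related exponential polynomial and then invoke Lemmas~\ref{LRS3} and~\ref{LRS2} directly.

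I would set $G(z) = F(z) - \mu$, so that the set to be bounded coincides with $\cZ(G)$. Writing
$$
G(z) = \sum_{i=1}^{k} f_i(z) \alpha_i^z + (-\mu)\,\alpha_{k+1}^z,
$$
we see that $G$ is an exponential polynomial of the form~\eqref{pol1} with frequencies among $\alpha_1, \ldots, \alpha_{k+1}$ and coefficients $f_1, \ldots, f_k, -\mu$. Since $\mu \ne 0$, each coefficient is nonzero; the degree bound $\deg f_i + 1 \le a$ carries over to $G$ because $-\mu$ is a nonzero constant. If some $\alpha_j$ happens to equal $1 = \alpha_{k+1}$, the two corresponding terms merge into a single term with coefficient $f_j(z) - \mu$, which is still a nonzero polynomial of degree at most $a-1$; otherwise $G$ has exactly $k+1$ distinct frequencies. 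In either case $G$ has at most $k+1$ distinct nonzero frequencies.

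For the first bound I would verify the hypotheses of Lemma~\ref{LRS3} applied to $G$. The distinguished index $i_0$ from the Corollary works verbatim, and the assumption $\alpha_{i_0}/\alpha_{k+1} = \alpha_{i_0}$ is not a root of unity guarantees $\alpha_{i_0} \ne 1$, so the $i_0$-th term is never absorbed by a collapse. The group of roots of unity generated by all quotients of distinct frequencies of $G$ has order exactly the $D$ defined in the statement (whether or not a collapse occurs). Applying Lemma~\ref{LRS3} with $k$ replaced by the number of distinct frequencies of $G$ (which is at most $k+1$) and using monotonicity of the bound in that parameter yields
$$
|\cZ(G)| \le D\,(8(k+1)^a)^{8(k+1)^{6a}}.
$$

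For the ``furthermore'' part, I would combine the non-degeneracy of the original LRS with the hypothesis that no $\alpha_i$ is a root of unity to conclude that all quotients among $\alpha_1, \ldots, \alpha_k, 1$ are non-roots of unity. In particular no collapse occurs, so $G$ has exactly $k+1 = m+1$ distinct frequencies with nonzero constant coefficients, and the associated LRS is non-degenerate of order $m+1$. The second part of Lemma~\ref{LRS2} then gives the bound $(8(m+1))^{4(m+1)^5}$. The only mild subtlety in the whole argument is handling the potential coincidence $\alpha_j = 1$ in the general case; once one observes that this is harmless (or ruled out in the non-degenerate setting), the Corollary follows immediately from the preceding lemmas via the shift $F \mapsto F - \mu$.
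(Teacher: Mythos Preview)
Your proof is correct and follows exactly the approach of the paper: rewrite $F(n)=\mu$ as the vanishing of the exponential polynomial $\sum_{i=1}^{k} f_i(n)\alpha_i^n + (-\mu)\cdot 1^n$ and apply Lemmas~\ref{LRS3} and~\ref{LRS2}. You are in fact more careful than the paper in handling the possible coincidence $\alpha_j=1$; the only tiny slip is the assertion that $f_j(z)-\mu$ is ``still a nonzero polynomial'' (it could vanish identically if $f_j\equiv\mu$), but in that case $G$ simply has one fewer frequency, the $i_0$-th term survives, and your monotonicity remark covers the bound unchanged.
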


\begin{proof}
Under the assumptions, we can get the desired results by applying Lemmas \ref{LRS2} and \ref{LRS3} to the following equation
$$
\sum_{i=1}^{k}f_i(n)\alpha_i^n+(-\mu)\cdot 1=0, \quad n \ge 0,
$$
with coefficients $f_1(n),\ldots,f_k(n),-\mu$.
\end{proof}

When $\alpha_1,\ldots,\alpha_k$ are algebraic numbers, the results in Lemma \ref{LRS3} and Corollary \ref{cor:LRS} can be improved in some sense. The following lemma is derived from \cite[Theorem 1]{PSc} with a slight refinement. Although the arguments in \cite{PSc} were presented only for non-degenerate sequences, they are also valid for more general cases under minor changes.

\begin{lemma}
\label{LRS4}
Let $F(z)$ be given by \eqref{pol1}. Suppose that $\alpha_1,\ldots,\alpha_k$ are algebraic numbers, and let $D$ be the order of the group of roots of unity generated by all those roots of unity which are of the form $\alpha_i/\alpha_j$ for some $1\le i,j \le k$. Denote $K=\Q(\alpha_1,\ldots,\alpha_k)$, and assume that $f_1,\ldots,f_k\in K[z]$. Let $d=[K:\Q]$, and let $\omega$ be the number of prime ideals occurring in the decomposition of the fractional ideals $\gen{\alpha_i}$ in $K$. Assume  that there exists some index $i_0$ such that the ratio $\alpha_{i_0}/\alpha_j$ is not a root of unity for any $j\ne i_0$. Then, we have
$$
| \cZ(F) | < D(4(d+\omega))^{2(d+1)}(m-1);
$$
furthermore if $K/\Q$ is a Galois extension but not a cyclic extension, we have
$$
| \cZ(F) | < D(4(d+\omega))^{d+2}(m-1).
$$
\end{lemma}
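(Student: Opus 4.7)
The plan is to mirror the strategy of Lemma~\ref{LRS3}, reducing the possibly degenerate exponential polynomial $F(z)$ to a family of non-degenerate ones indexed by a residue modulo $D$, and then to invoke \cite[Theorem 1]{PSc} in place of the coarser Lemma~\ref{LRS2}. For a non-degenerate LRS over a number field, \cite{PSc} provides an upper bound of the form $(4(d+\omega))^{2(d+1)}(m-1)$, and the sharper $(4(d+\omega))^{d+2}(m-1)$ when the coefficient field is Galois but not cyclic over $\Q$; multiplied by the factor $D$ coming from the reduction, these are precisely the claimed bounds.

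First, I would partition $\{\alpha_1,\ldots,\alpha_k\}$ into equivalence classes under the relation $b\sim c$ iff $b/c$ is a root of unity, choosing representatives $\alpha_1,\ldots,\alpha_s$; the hypothesis on $\alpha_{i_0}$ isolates it as a singleton class. For each $b\in\{0,1,\ldots,D-1\}$, writing $\alpha_i=\alpha_{j(i)}\zeta_i$ with $\zeta_i^D=1$ and using $\zeta_i^{nD}=1$, one obtains
$$
F(b+nD)=\sum_{j=1}^{s}g_j(n)(\alpha_j^D)^n,\qquad g_j(n)=\alpha_j^b\sum_{\alpha_i\sim\alpha_j}f_i(b+nD)\,\zeta_i^b\in K[n].
$$
The coefficient attached to the singleton class $\{\alpha_{i_0}\}$ reduces to $\alpha_{i_0}^b f_{i_0}(b+nD)$, which is non-zero as a polynomial in $n$, so the right-hand side is not identically zero. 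Since the ratios $\alpha_j^D/\alpha_{j'}^D$ are powers of $\alpha_j/\alpha_{j'}$ and the latter are not roots of unity for $j\ne j'$ by construction, the resulting LRS is non-degenerate; its order $m'$ satisfies $m'\le m$, its coefficient field sits inside $K$ so has degree at most $d$, and the primes occurring in the decomposition of the ideals $\gen{\alpha_j^D}$ form a subset of those of $\gen{\alpha_j}$, so the relevant support has size at most $\omega$.

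Applying \cite[Theorem 1]{PSc} to each non-degenerate sequence $\{F(b+nD)\}_{n\ge 0}$ would then bound the corresponding zero set by $(4(d+\omega))^{2(d+1)}(m-1)$, and summing these bounds over $b=0,1,\ldots,D-1$ yields the first claim. The Galois-not-cyclic case follows in exactly the same way, with the sharper constant $(4(d+\omega))^{d+2}$ from the same theorem.

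The main obstacle is the caveat flagged in the statement: \cite[Theorem 1]{PSc} is written only for simple non-degenerate LRS, whereas we need it with polynomial coefficients $f_i\in K[z]$. I expect this to be a routine extension, as the quantitative subspace-theorem argument underlying \cite{PSc} is controlled by the order $m$ of the LRS rather than by the individual degrees of the coefficients; replacing constants by polynomials should amount to a change of bookkeeping that preserves both exponents $2(d+1)$ and $d+2$.
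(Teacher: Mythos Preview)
Your reduction modulo $D$ followed by a black-box appeal to \cite[Theorem~1]{PSc} is close in spirit to the paper's argument and would yield the first bound, but the paper proceeds differently: it reopens the $p$-adic argument of \cite{PSc}, chooses the auxiliary prime $p$ as there, and reduces modulo $D(p^f-1)$ in one step (with $f$ the residue degree of $p$ in $K$), obtaining $|\cZ(F)|\le D(p^f-1)(m-1)(p+1)$ before inserting the estimate $p<(4(d+\omega))^2$.

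The genuine gap in your proposal is the Galois-non-cyclic refinement. You assume that the sharper exponent $d+2$ is already provided by \cite[Theorem~1]{PSc}; it is not. That improvement is precisely the ``slight refinement'' this paper contributes, and it requires access to the internal parameter $f$ of the \cite{PSc} argument: one observes that if $K/\Q$ is Galois but not cyclic then no rational prime is inert in $K$ (the decomposition group, being cyclic of order $f$, cannot equal $\Gal(K/\Q)$), whence $f\le d/2$, which halves the exponent. This step is invisible if you treat \cite{PSc} as a black box, and your two-stage reduction gives no handle on it. To recover the second inequality you must open up the $p$-adic machinery as the paper does.

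A secondary point: your worry that \cite{PSc} is restricted to \emph{simple} sequences is misplaced; the Strassman-type $p$-adic zero count there is governed by the order $m$ of the recurrence and accommodates polynomial coefficients. The actual limitation of \cite{PSc} is to \emph{non-degenerate} sequences, which your modulo-$D$ reduction correctly handles.
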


\begin{proof}
Here, we sketch the proof for the convenience of the reader.

We first choose a rational prime $p$ such that none of the prime ideals $\p_1,\ldots,\p_\omega$ from the decomposition in $K$ of the ideals $(\alpha_i)$ ($1\le i \le k$) divides the ideal $\gen{p}$. Let $\p$ be a prime ideal of $K$ lying above $p$, and let $f$ denote the residue class degree of $K_\p$ over $\Q_p$, where $\Q_p$ is the $p$-adic completion of $\Q$ and $K_\p$ is the completion of $K$ with respect to $\p$. Let $\C_p$ be the completion of the algebraic closure of $\Q_p$. We denote the valuation of $\C_p$ by $|\,\,|_p$, normalised such that $|p|_p=p^{-1}$. Note that $\Q_p \subseteq K_\p \subseteq \C_p$.

By the choice of $p$, we have
$$
|\alpha_i|_p=1, \quad i=1,\ldots,k.
$$
Furthermore, by \cite[Equation (3.4)]{PSc} we know
$$
\left|\alpha_i^{p^f-1}-1\right|_p<p^{-1/(p(p-1))-1/(p-1)}, \quad i=1,\ldots,k.
$$
Then, we have
$$
\left|\alpha_i^{D(p^f-1)}-1\right|_p\le \left|\alpha_i^{p^f-1}-1\right|_p <p^{-1/(p(p-1))-1/(p-1)}, \quad i=1,\ldots,k.
$$
Fix an integer $a$ with $0\le a < D(p^f-1)$, we consider the equation
$$F\(a+nD\(p^f-1\)\)=0$$
 with integer unknown $n\ge 0$.

As in the proof of Lemma \ref{LRS3}, we partition $\alpha_1,\ldots,\alpha_k$ into equivalence classes, and we assume that $\alpha_1,\ldots,\alpha_s$ are representatives of these equivalence classes. Then, by the choice of $D$, we can express $F\(a+nD\(p^f-1\)\)$ as
 $$
 F\(a+nD\(p^f-1\)\)=\sum_{i=1}^{s}g_i(n)\(\alpha_i^{D(p^f-1)}\)^n
 $$
 for some polynomials $g_i\in K[z]$. Under the assumption of $\alpha_{i_0}$, there indeed exists some index $j$ such that $g_j\ne 0$.

 As solving the equation (3.6) of \cite{PSc}, we immediately see that the cardinality of the set $\left\{n\in \Z:F\(a+nD\(p^f-1\)\)=0, n\ge 0\right\}$ is at most $(m-1)(p+1)$. Thus, we obtain
 $$
 | \cZ(F) | \le D\(p^f-1\)(m-1)(p+1).
 $$
 From \cite[Section 4]{PSc}, the prime $p$ can be chosen such that $$p<(4(d+\omega))^2.$$
Then, the first desired upper bound follows from the fact that $f\le d$.

 Now, we assume that $K/\Q$ is a Galois extension but not a cyclic extension.
 In order to prove the second claimed upper bound, it suffices to show that $p$ does not remain inert in $K$. Because if this is true, then $f\le d/2$, which can conclude the proof.

Let $D_p$  denote the decomposition group of $p$ in $K/\Q$. Suppose that $p$ remains inert in $K$. Then, $f=d$, and $D_p$ is a cyclic group of order $d$. Since $[K:\Q]=d$, $D_p$ is exactly the Galois group $\Gal(K/\Q)$. So, $K/\Q$ is a cyclic extension, this leads to a contradiction.
\end{proof}

Applying the same arguments as in the proof of Corollary \ref{cor:LRS}, we can obtain similar results as Lemma \ref{LRS4} for the cardinality $| \{ n\in \Z: F(n)=\mu, n\ge 0\} |$, where $\mu$ is a non-zero algebraic number.

We also need a result on solutions of linear equations in several variables. The following result is given in \cite[Lemma 2.1]{AV2} and is derived from \cite[Theorem 6.2]{AV1}.

\begin{lemma}
\label{thm:Sunit}
Let $\Gamma$ be finitely generated subgroup of $(\C^*)^k$ of rank $r$, and let $a_1,\ldots,a_k\in \C^*$. Then, up to proportionality, the equation
\begin{equation}
\label{eq:unit}
a_1x_1+\cdots+a_kx_k=0
\end{equation}
has less than $(8k)^{4(k-1)^4(k+r)}$ non-degenerate solutions in $\Gamma$.
\end{lemma}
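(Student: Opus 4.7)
The plan is to reduce the homogeneous equation \eqref{eq:unit} to an inhomogeneous S-unit equation $z_1+\cdots+z_{k-1}=1$ and then invoke the bound of Amoroso and Viada~\cite[Theorem~6.2]{AV1}, which counts non-degenerate solutions of such equations in a finitely generated multiplicative group.

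First, since $a_k\ne 0$ and since we count solutions only up to proportionality, for each non-degenerate class I would pick a representative $(x_1,\ldots,x_k)\in\Gamma$ with $x_k\ne 0$; classes with some $x_i=0$ correspond to the same problem in strictly fewer variables and are absorbed into the final bound by induction on $k$. Dividing \eqref{eq:unit} through by $-a_k x_k$ gives
$$
\sum_{i=1}^{k-1}b_i y_i=1,\qquad b_i=-\frac{a_i}{a_k},\ y_i=\frac{x_i}{x_k}.
$$
The projection $(x_1,\ldots,x_k)\mapsto(x_1/x_k,\ldots,x_{k-1}/x_k)$ sends $\Gamma$ onto a subgroup of $(\C^*)^{k-1}$ of rank at most $r$, and distinct proportionality classes have distinct images. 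Next, I would absorb the coefficients $b_i$ into the group by enlarging this image with the $k-1$ fixed elements of the form $(1,\ldots,b_i,\ldots,1)$, obtaining a finitely generated subgroup of $(\C^*)^{k-1}$ of rank at most $r+(k-1)$ that contains every tuple $(b_1 y_1,\ldots,b_{k-1} y_{k-1})$ arising from a solution of \eqref{eq:unit}.

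Finally, I would apply \cite[Theorem~6.2]{AV1} to the S-unit equation $z_1+\cdots+z_{k-1}=1$ inside this enlarged group, after verifying that non-degeneracy of the class of $(x_1,\ldots,x_k)$ matches non-degeneracy of $(z_1,\ldots,z_{k-1})=(b_1 y_1,\ldots,b_{k-1} y_{k-1})$; this transfer is immediate from the defining vanishing-subsum condition, since the $b_i$ are non-zero. The main obstacle is a careful bookkeeping of constants: after the rank has increased from $r$ to at most $r+(k-1)$ and the number of summands has dropped from $k$ to $k-1$, the exponent in the Amoroso--Viada bound must simplify precisely to $4(k-1)^4(k+r)$ with base $8k$, so I would need to track the numerical dependencies in their bound attentively rather than let them inflate during the reduction.
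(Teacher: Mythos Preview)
The paper does not prove this lemma at all: it simply quotes it as \cite[Lemma~2.1]{AV2}, noting that there it is derived from \cite[Theorem~6.2]{AV1}. Your sketch is exactly the standard reduction by which that derivation is carried out, so your approach is correct and matches the intended source.

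Two small clean-ups. First, the remark about ``classes with some $x_i=0$'' is vacuous: $\Gamma\subseteq(\C^*)^k$, so every coordinate of every element of $\Gamma$ is already non-zero, and no induction on $k$ is needed. Second, you enlarge the projected group by the $k-1$ separate elements $(1,\ldots,b_i,\ldots,1)$, which raises the rank by up to $k-1$; it is more efficient to adjoin the single element $(b_1,\ldots,b_{k-1})$, since
\[
(b_1y_1,\ldots,b_{k-1}y_{k-1})=(b_1,\ldots,b_{k-1})\cdot(y_1,\ldots,y_{k-1})
\]
already lies in the group so generated, of rank at most $r+1$. With this tighter rank bound the bookkeeping against the Amoroso--Viada constant goes through to give the stated exponent $4(k-1)^4(k+r)$; with your version the exponent would come out larger.
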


Here, ``\textit{up to proportionality}'' means that two 
solutions $(x_1,\ldots,x_k)$ and $(y_1,\ldots,y_k)$ of \eqref{eq:unit} are equivalent if there is some non-zero $c$ such that
$$
  (y_1,\ldots,y_k) = (c x_1,\ldots,c x_k).
$$
Besides, we call a solution of \eqref{eq:unit} \textit{non-degenerate} if no subsum of the left hand side of the equation vanishes. 

Let $K$ be a number field, let $S$ be a finite set of places of $K$ containing all the Archimedean places and write $\cO_S^*$ for the group of $S$-units of $K$.
If the coefficients $a_1,\ldots,a_k\in K \setminus \{0\}$, then the number of such solutions of \eqref{eq:unit} in $\Gamma \subseteq (\cO_S^*)^k$ can be bounded better than Lemma \ref{thm:Sunit}; for example see \cite[Theorem 3]{Ever}. So, some results in this paper can be improved in this case.

Let $\cP$ be a partition of the set $I=\{1,\ldots,k\}$. The subsets $\lambda\subseteq I$ occurring in the partition $\cP$ are considered as elements of $\cP$. Then, the system of equations
\begin{equation}
\label{eqs:Sunit}
\sum_{i\in \lambda} a_ix_i=0 \quad (\lambda\in \cP)
\tag{2.4 $\cP$}
\end{equation}
is a refinement of \eqref{eq:unit}. Given a partition $\cP$ of the set $I$, we say that two 
solutions $(x_1,\ldots,x_k)$ and $(y_1,\ldots,y_k)$ of \eqref{eq:unit}  are equivalent \textit{up to proportionality with respect to $\cP$} if both of them are also solutions of the system \eqref{eqs:Sunit}, and for each $\lambda\in \cP$ the two solutions $(x_i)_{i\in \lambda}$ and $(y_i)_{i\in \lambda}$ of the corresponding equation are equivalent up to proportionality. 

Finally, two 
solutions $(x_1,\ldots,x_k)$ and $(y_1,\ldots,y_k)$ of \eqref{eq:unit}  are called equivalent \textit{up to weak proportionality} if there exists a partition $\cP$ of the set $I$ such that they are equivalent up to proportionality with respect to $\cP$.

Now, we want to count all the solutions of \eqref{eq:unit} up to weak  proportionality.

\begin{corollary}
\label{cor:Sunit}
Let $\Gamma$ be finitely generated subgroup of $(\C^*)^k$ of rank $r$, and let $a_1,\ldots,a_k\in \C^*$. Then, up to weak  proportionality,  the equation \eqref{eq:unit}
has less than
$$(0.5k)^k(8k)^{4(k-1)^4(k+r)}$$
 solutions in $\Gamma$.
\end{corollary}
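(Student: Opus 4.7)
The plan is to bound $N$, the number of weak-proportionality classes of solutions of \eqref{eq:unit} in $\Gamma$, by stratifying over partitions of $I = \{1, \ldots, k\}$ and applying Lemma~\ref{thm:Sunit} block by block. First I would observe that every solution $\mathbf{x} \in \Gamma$ of \eqref{eq:unit} is a solution of \eqref{eqs:Sunit} for some partition $\cP$ minimal with respect to this property; such a $\cP$ automatically contains no block of size one (since $a_i x_i = 0$ has no solution with $a_i, x_i \in \C^*$), and by minimality the restriction $(x_i)_{i \in \lambda}$ to each block $\lambda \in \cP$ must be a non-degenerate solution of $\sum_{i \in \lambda} a_i x_i = 0$ (otherwise $\cP$ could be refined further). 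The projection $\pi_\lambda(\Gamma) \subseteq (\C^*)^{|\lambda|}$ has rank at most $r$, and the $\cP$-proportionality class of such a representative determines the weak-proportionality class of $\mathbf{x}$, so Lemma~\ref{thm:Sunit} yields the stratified bound
$$
N \le \sum_{\cP} \prod_{\lambda \in \cP} (8|\lambda|)^{4(|\lambda|-1)^4(|\lambda|+r)},
$$
the sum running over all partitions $\cP$ of $I$.

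The next step is to show that every summand is at most $(8k)^{4(k-1)^4(k+r)}$. Since $|\lambda| \le k$ in each factor, this reduces to the superadditivity inequality
$$
\sum_{\lambda \in \cP} (|\lambda|-1)^4(|\lambda|+r) \le (k-1)^4(k+r),
$$
which I would derive by iterating the two-block estimate $(a-1)^4(a+r) + (b-1)^4(b+r) \le (a+b-1)^4(a+b+r)$ for $a, b \ge 1$. The latter follows from $a+r,\, b+r \le a+b+r$ together with the elementary inequality $(a-1)^4 + (b-1)^4 \le (a+b-2)^4 \le (a+b-1)^4$, itself a consequence of $x^4 + y^4 \le (x+y)^4$ for $x, y \ge 0$. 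Combining this with the Bell number bound $B_k < (k/2)^k$ (valid for $k \ge 4$; the cases $k \le 3$ are handled directly using that canonical partitions contain no singleton blocks, so effectively only one partition contributes) then delivers the claimed inequality.

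The main obstacle I anticipate is the tightness of the Bell number estimate: the factor $(0.5k)^k$ in the target bound is close to the true size of $B_k$, and establishing $B_k < (k/2)^k$ uniformly for $k \ge 4$ requires a standard but nontrivial asymptotic bound on Bell numbers. The non-degeneracy reduction in the first paragraph and the superadditivity check in the second are both essentially elementary, so the proof ultimately rests on clean partition bookkeeping combined with this asymptotic estimate.
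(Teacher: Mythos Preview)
Your proposal is correct and follows essentially the same architecture as the paper's proof: stratify solutions by a partition $\cP$ of $I$ into blocks with no singletons, apply Lemma~\ref{thm:Sunit} to each block of the system \eqref{eqs:Sunit} restricted to the projected group, bound the resulting product of exponentials by the single term $(8k)^{4(k-1)^4(k+r)}$, and then sum over partitions using a bound of the form $(0.5k)^k$.

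Two minor tactical differences are worth noting. First, the paper bounds the rank of each projection by writing $\Gamma$ as a product and asserting $\sum_{\lambda}r(\lambda)=r$, then uses the crude inequality $(|\lambda|-1)^4\le (k-1)^4$ together with $\sum_\lambda(|\lambda|+r(\lambda))=k+r$ to control the exponent; your route via $r(\lambda)\le r$ and the explicit superadditivity $(a-1)^4(a+r)+(b-1)^4(b+r)\le(a+b-1)^4(a+b+r)$ is arguably cleaner, since the rank of a projection is certainly at most $r$ while the identity $\sum_\lambda r(\lambda)=r$ need not hold for an arbitrary subgroup $\Gamma\subseteq(\C^*)^k$. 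Second, for the partition count the paper observes directly that partitions with every block of size at least~$2$ have at most $k/2$ blocks and hence number at most $(k/2)^k$ (via the trivial surjection from functions $I\to\{1,\dots,\lfloor k/2\rfloor\}$), which avoids your appeal to the Bell-number bound $B_k<(k/2)^k$ for $k\ge 4$ and the separate treatment of $k\le 3$; this is slightly more direct, but your version works as well, and the cited bound of Berend--Tassa in the paper already gives $B_k<(0.792k/\log(k+1))^k\le (k/2)^k$ for $k\ge 4$.
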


\begin{proof} 
Let $\cP$ be a partition of the set $I=\{1,\ldots,k\}$.
 Note that in order to ensure that the system \eqref{eqs:Sunit} has solutions in $\Gamma$ we must have that $|\lambda|\ge 2$ for any $\lambda\in \cP$. So, we can assume that $|\cP|\le k/2$.

If $\cQ$ is another partition of $I$ such that $\cQ$ is a refinement of $\cP$, then the system (2.4 $\cQ$) implies \eqref{eqs:Sunit}. Let $\cT(\cP)$ consist of solutions of \eqref{eqs:Sunit} in $\Gamma$ up to proportionality with respect to $\cP$, which are not solutions of any (2.4 $\cQ$) where $\cQ$ is a proper refinement of $\cP$.

According to the partition $\cP$, we can treat $\Gamma$ as a direct product
$$
\Gamma= \prod_{\lambda\in \cP} \Gamma(\lambda),
$$
where $\Gamma(\lambda)$ is the projection of $\Gamma$ corresponding to $\lambda$. For each $\lambda \in \Gamma$, $\Gamma(\lambda)$ is also a finitely generated group and let $r(\lambda)$ be its rank. Obviously, we have
$$
\sum_{\lambda\in \cP} r(\lambda) = r.
$$

For each equation in \eqref{eqs:Sunit}
$$
\sum_{i\in \lambda} a_ix_i=0,
$$
by Lemma \ref{thm:Sunit} it has less than $(8|\lambda|)^{4(|\lambda|-1)^4(|\lambda|+r(\lambda))}$ non-degenerate solutions in $\Gamma(\lambda)$ up to proportionality. Thus, we have
\begin{align*}
|\cT(\cP)| & < \prod_{\lambda\in \cP} (8|\lambda|)^{4(|\lambda|-1)^4(|\lambda|+r(\lambda))}\\
& < (8k)^{4(k-1)^4(k+r)}.
\end{align*}

Recall that the Bell numbers count the number of partitions of a set. By \cite[Theorem 2.1]{BT}, the number of partitions of $I$ is less than
$$
\left( 0.792k/\log (k+1) \right)^k.
$$
However, not all such partitions are suitable for our settings. We have indicated that a \textit{suitable partition} $\cP$ should satisfy that $|\lambda|\ge 2$ for any $\lambda\in \cP$. 
So, the number of these suitable partitions is not greater than 
$\left( 0.5k \right)^k$. 

Note that every solution of the equation \eqref{eq:unit} is a solution of the system \eqref{eqs:Sunit} for some partition $\cP$, and we can assume that $k\ge 2$.
So, up to weak proportionality, the number of solutions of \eqref{eq:unit} in $\Gamma$ is at most
\begin{align*}
\sum_{\cP} |\cT(\cP)|&< \left( 0.5k \right)^k  (8k)^{4(k-1)^4(k+r)},
\end{align*}
where the sum runs through all suitable partitions of $I$. 
This completes the proof.
\end{proof}

Finally, we state a result due to
Schlickewei and Schmidt~\cite[Theorem 1]{SchSch}. For a vector $\vec{x}=(x_1,\ldots,x_m)\in\Z^m$ and $\pmb{\alpha}=(\alpha_1,\ldots,\alpha_m)\in\ov{\Q}^m$, we denote
$$
\pmb{\alpha}^{\vec{x}}=\alpha_{1}^{x_1}\cdots \alpha_m^{x_m}.
$$

\begin{lemma}
\label{thm:schsch}
Fix $\pmb{\alpha}_i\in (\ov{\Q}^*)^m$, $i=1,\ldots,k$, such that for any $1\le i\ne j\le k$ the set of $\vec{z}\in\Z^m$ with
$$
\pmb{\alpha}_i^{\vec{z}}=\pmb{\alpha}_j^{\vec{z}}
$$
 contains only the zero vector. Then, the number of solutions to the equation
$$
\sum_{i=1}^k a_i \pmb{\alpha}_i^{\vec{x}}=0
$$
with non-zero algebraic numbers $a_i$
is less than
$$(0.5k)^k2^{35B^3}d^{6B^2},$$
where $B=\max(m,k)$ and $d$ is the degree of the number field generated by the coefficients $a_i$ and the vectors $\pmb{\alpha}_i$.
\end{lemma}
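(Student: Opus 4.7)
The plan is to mimic the partition argument used in the proof of Corollary~\ref{cor:Sunit}, passing from Schlickewei and Schmidt's bound on non-degenerate solutions to a bound on all solutions $\vec{x}\in\Z^m$ of the equation $\sum_{i=1}^{k}a_i\pmb{\alpha}_i^{\vec{x}}=0$.

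First, I would classify each solution $\vec{x}$ according to a ``vanishing partition'' $\cP=\cP(\vec{x})$ of the index set $I=\{1,\ldots,k\}$, defined to be the finest partition of $I$ such that $\sum_{i\in\lambda}a_i\pmb{\alpha}_i^{\vec{x}}=0$ for every $\lambda\in\cP$. By the minimality of $\cP$, for each block $\lambda\in\cP$ no proper nonempty sub-sum vanishes at $\vec{x}$, so $\vec{x}$ is a \emph{non-degenerate} solution of the restricted equation $\sum_{i\in\lambda}a_i\pmb{\alpha}_i^{\vec{x}}=0$. Because no singleton block $\lambda=\{i\}$ can arise (as $a_i\pmb{\alpha}_i^{\vec{x}}\ne 0$), attention is restricted to ``suitable'' partitions with $|\lambda|\ge 2$ for every $\lambda\in\cP$.

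Second, for each such partition $\cP$ and each block $\lambda\in\cP$, the restricted tuple $\{\pmb{\alpha}_i\}_{i\in\lambda}$ automatically inherits the hypothesis that pairwise $\pmb{\alpha}_i^{\vec{z}}=\pmb{\alpha}_j^{\vec{z}}$ forces $\vec{z}=\vec{0}$. I would therefore apply the original Schlickewei--Schmidt estimate \cite[Theorem 1]{SchSch} to a single block equation to bound its non-degenerate solution set by $2^{35|\lambda|^3}d^{6|\lambda|^2}\le 2^{35B^3}d^{6B^2}$. Since any $\vec{x}$ whose vanishing partition is $\cP$ must satisfy every block equation, picking one block already bounds by $2^{35B^3}d^{6B^2}$ the number of such $\vec{x}$.

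Finally, as in the proof of Corollary~\ref{cor:Sunit}, the number of partitions of $I$ whose blocks all have size at least $2$ is at most $(0.5k)^k$ by \cite[Theorem 2.1]{BT}. Summing the per-partition bound over all suitable partitions yields the asserted total $(0.5k)^k\,2^{35B^3}d^{6B^2}$. The main point requiring care is the compatibility between my blockwise ``no proper sub-sum vanishes'' condition and the non-degeneracy hypothesis of Schlickewei--Schmidt; once this matching is in place and the inheritance of the hypothesis on the $\pmb{\alpha}_i$ to subsets is noted, the remainder is bookkeeping parallel to Corollary~\ref{cor:Sunit}.
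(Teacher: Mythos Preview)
Your proposal is correct and follows essentially the same route as the paper's proof, which is a one-sentence sketch: apply \cite[Theorem~1]{SchSch} to each block of a vanishing partition and sum over partitions, using the $(0.5k)^k$ count for partitions into blocks of size at least $2$. One small slip: the Schlickewei--Schmidt bound for a single block $\lambda$ is $2^{35\max(m,|\lambda|)^3}d^{6\max(m,|\lambda|)^2}$, not $2^{35|\lambda|^3}d^{6|\lambda|^2}$, since the number of variables $m$ does not shrink when you restrict to a block; but since $\max(m,|\lambda|)\le B$, your final inequality and the overall argument remain intact.
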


\begin{proof}
The desired result can be easily proved by using \cite[Theorem 1]{SchSch} and counting the solutions through all the partitions of the set $\{1,2,\ldots,k\}$.
\end{proof}

\section{Main results}
\label{sec:quan}

We first recall a notation. For any polynomial $G\in\C[X_1,\ldots,X_m]$, we let
$$
\n(G)=\textrm{the number of monomials of $G$}.
$$
As usual, here we treat the non-zero constant term as a monomial.

Recall that for any point $\w\in \C^m$, we denote its coordinates by $(w_1,\ldots,w_m)$. We start the discussions by dealing with the simplest case.

\begin{theorem}
\label{thm:power d}
Let $\Phi=\(X_1^d,\ldots,X_m^d\)$ with integer $d\ge 2$. Fix a hypersurface $V=Z(G)$, where
$$
G=\sum_{i_1,\ldots,i_m}a_{i_1,\ldots,i_m}X_1^{i_1}\cdots X_m^{i_m}\in\C[X_1,\ldots,X_m]
$$
with $G\ne 0$ and $i_j\ge 0$ for $1\le j \le m$.
Then, for any $\w\in(\C^*)^m$ with multiplicatively independent coordinates,  we have
$$
| \S_{\vec{w}}(\Phi,V) | \le (8\n(G))^{4\n(G)^5}.
$$
\end{theorem}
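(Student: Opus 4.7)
The plan is to reduce the counting problem to bounding the zero set of a simple non-degenerate linear recurrence sequence, and then apply the second bound in Lemma~\ref{LRS2}.

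Concretely, since $\Phi^{(n)}(\vec w)=\bigl(w_1^{d^n},\ldots,w_m^{d^n}\bigr)$, evaluating $G$ at $\Phi^{(n)}(\vec w)$ gives
$$
G\bigl(\Phi^{(n)}(\vec w)\bigr)=\sum_{(i_1,\ldots,i_m)} a_{i_1,\ldots,i_m}\bigl(w_1^{i_1}\cdots w_m^{i_m}\bigr)^{d^n}.
$$
Enumerate the $k=\n(G)$ monomials of $G$ as $j=1,\ldots,k$ with non-zero coefficients $a_j$ and associated values $\beta_j=w_1^{i_{j,1}}\cdots w_m^{i_{j,m}}\in\C^*$, and put
$$
v_n=\sum_{j=1}^{k}a_j\beta_j^{n},\qquad n\ge 0.
$$
Then $\S_{\vec w}(\Phi,V)=\{n\ge 0:v_{d^n}=0\}$, and since the map $n\mapsto d^n$ is injective into $\N$ for $d\ge 2$, I get
$$
|\S_{\vec w}(\Phi,V)|\le |\cZ(v)|,
$$
where $\cZ(v)$ is the zero set of the exponential polynomial $F(z)=\sum_{j=1}^k a_j\beta_j^{z}$.

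The second step is to verify that this $F$ satisfies the hypotheses of the second assertion of Lemma~\ref{LRS2}. The coefficients $a_j$ are non-zero constants by construction, so it remains to check non-degeneracy and distinctness of the $\beta_j$. Distinct exponent tuples yield distinct $\beta_j$: if $\beta_j=\beta_{j'}$ then some non-trivial monomial in $w_1,\ldots,w_m$ equals $1$, contradicting the multiplicative independence of the coordinates of $\vec w$. The same argument shows that no ratio $\beta_j/\beta_{j'}$ with $j\ne j'$ can be a root of unity, for otherwise raising to a suitable positive integer power would again produce a non-trivial multiplicative relation among $w_1,\ldots,w_m$. Consequently $F(z)$ corresponds to a simple non-degenerate LRS of order $k$, and Lemma~\ref{LRS2} (second bound, with $m=k$) yields
$$
|\cZ(v)|\le (8k)^{4k^{5}}=(8\n(G))^{4\n(G)^{5}}.
$$
Combining the two inequalities gives the claimed bound.

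There is no real obstacle here; the only delicate point is the non-degeneracy check, which follows cleanly from the multiplicative independence assumption on $w_1,\ldots,w_m$. The essential conceptual step is recognising that, although $n\mapsto d^n$ is not the index of the recurrence, it suffices to bound the zero set of the underlying LRS in $\beta_1,\ldots,\beta_k$ because any $n$ contributing to $\S_{\vec w}(\Phi,V)$ is witnessed by the distinct non-negative integer $d^n$ lying in that zero set.
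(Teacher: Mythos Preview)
Your proof is correct and follows essentially the same route as the paper: reduce to bounding zeros of the exponential polynomial $F(z)=\sum_j a_j\beta_j^z$, use multiplicative independence of the $w_\ell$ to get distinctness and non-degeneracy of the $\beta_j$, and apply the second bound of Lemma~\ref{LRS2}. The only cosmetic difference is that the paper separates the case where $G$ has a non-zero constant term (invoking Corollary~\ref{cor:LRS} instead), whereas you handle both cases uniformly by allowing $\beta_j=1$ as a characteristic root; your non-degeneracy check covers this case as well, so the unified treatment is fine.
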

\begin{proof}
For the given point $\vec{w}$, we want to bound the number of integers $n\ge 0$ such that $\Phi^{(n)}(\vec{w})\in V$, that is
$$
\sum_{i_1,\ldots,i_m}a_{i_1,\ldots,i_m}\(w_1^{i_1} \cdots w_m^{i_m}\)^{d^n}=0.
$$
This is upper bounded by the number of integers $n\ge 0$ such that
$$
\sum_{i_1,\ldots,i_m}a_{i_1,\ldots,i_m}\(w_1^{i_1} \cdots w_m^{i_m}\)^{n}=0.
$$
Since the coordinates of $\vec{w}$ are multiplicatively independent, the upper bound follows from Lemma \ref{LRS2} and Corollary \ref{cor:LRS} by noticing whether $G$ has a non-zero constant term or not.
\end{proof}

One can relax the multiplicative independence condition on the point $\w$ in some special cases.

\begin{theorem}
\label{thm:pd}
Let $\Phi=\(X_1^d,\ldots,X_m^d\)$ with integer $d\ge 2$. Fix a hypersurface $V=Z(G)$, where
$$
G=\sum_{j=1}^{m}a_jX_j^{e_j}\in\C[X_1,\ldots,X_m]
$$
with $G\ne 0$ and $e_j\ge 0$ for $1\le j \le m$.
For any point $\w \in \C^m$, let $D$ be the order of the group of roots of unity generated by all those roots of unity which are of the form $w_i/w_j$ for some $1\le i,j \le m$. Suppose that there exists some index $j_0$ such that $w_{j_0}$ is not a root of unity, $w_{j_0}\ne 0, a_{j_0}\ne 0, e_{j_0}\ne 0$ and the ratio $w_j/w_{j_0}$ is not a root of unity for any $j\ne j_0$. Then, we have
$$
| \S_{\w}(\Phi,V) | \le D(8\n(G))^{8\n(G)^6}.
$$
\end{theorem}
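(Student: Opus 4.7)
I would mirror the reduction in the proof of Theorem \ref{thm:power d} and then invoke Lemma \ref{LRS3} (extended via Corollary \ref{cor:LRS} to handle a possible constant term). The condition $\Phi^{(n)}(\w)\in V$ reads $\sum_{j=1}^m a_j w_j^{e_j d^n}=0$, and since $\{d^n:n\ge 0\}\subseteq\{n\ge 0\}$, it suffices to bound the number of $n\ge 0$ satisfying $\sum_{j=1}^m a_j(w_j^{e_j})^n=0$.

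Grouping the non-trivial terms, let $J=\{j:a_j\ne 0,\,e_j>0,\,w_j\ne 0\}$, set $\beta_j=w_j^{e_j}$ for $j\in J$, and let $c=\sum_{j:\,a_j\ne 0,\,e_j=0}a_j$ be the total contribution of the constant monomials (the monomials with $w_j=0$ and $e_j>0$ simply vanish). The equation becomes
$$
F(n):=c+\sum_{j\in J}a_j\beta_j^n=0,
$$
an exponential polynomial with constant coefficients and at most $\n(G)$ terms, where the constant $c$, if non-zero, is counted as the coefficient of the base $1$. By hypothesis $a_{j_0}$, $e_{j_0}$, $w_{j_0}$ are all non-zero, so $j_0\in J$; and $\beta_{j_0}=w_{j_0}^{e_{j_0}}$ is not a root of unity since $w_{j_0}$ is not and $e_{j_0}\ne 0$. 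I would then take $j_0$ as the distinguished index in Lemma \ref{LRS3} (or Corollary \ref{cor:LRS} if $c\ne 0$), which, with $k\le \n(G)$ and $a=1$, yields a bound of the form $D'(8\n(G))^{8\n(G)^{6}}$, where $D'$ is the order of the group of roots of unity generated by the ratios $\beta_i/\beta_j$ (together with the $\beta_j$ themselves if $c\ne 0$).

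The remaining step is to bound $D'$ by the $D$ of the statement. Using that $w_{j_0}$ is not a root of unity and that $w_j/w_{j_0}$ is not a root of unity for any $j\ne j_0$, I would argue that every root of unity arising as $\beta_i/\beta_j=w_i^{e_i}/w_j^{e_j}$ has order dividing $D$, so that $D'\le D$; substituting this into the previous inequality gives exactly the claimed bound $D(8\n(G))^{8\n(G)^{6}}$.

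\textbf{Main obstacle.} The delicate point is the comparison $D'\le D$. The passage $w_j\mapsto\beta_j=w_j^{e_j}$ may appear to introduce new torsion, because the order of $(w_i/w_j)^e$ only divides that of $w_i/w_j$, and for $e_i\ne e_j$ the ratio $\beta_i/\beta_j$ is not a single power of $w_i/w_j$ at all. Ensuring that no spurious root of unity of order exceeding $D$ arises among the ratios $\beta_i/\beta_j$ is the crux of the argument, and it is precisely here that the distinguished index $j_0$ and the hypothesis on $w_j/w_{j_0}$ must be used.
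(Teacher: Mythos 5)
Your reduction and choice of tools mirror the paper's one-line proof exactly: replace $d^n$ by $n$, substitute $\beta_j=w_j^{e_j}$, absorb the constant monomials into a base-$1$ term, and invoke Lemma~\ref{LRS3} (or Corollary~\ref{cor:LRS}) with $k\le\n(G)$ and $a=1$. You also correctly locate the one non-trivial step: what Lemma~\ref{LRS3} actually produces is a bound $D'(8\n(G))^{8\n(G)^6}$ with $D'$ the order of the torsion group generated by the root-of-unity ratios $\beta_i/\beta_j$, and one must still relate $D'$ to the $D$ of the statement, defined via $w_i/w_j$, and must still check that some $\beta_{j_0}/\beta_j$ is never a root of unity.

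That is not a detail you can expect to fill in later: the comparison $D'\le D$ is false in general, and the distinguished index $j_0$ does not rescue it, because the hypotheses on $\w$ are stated for the numbers $w_j$ rather than for the $\beta_j=w_j^{e_j}$ that actually enter the exponential polynomial. Concretely, take $m=2$, $d=2$, $\w=(2,4\zeta_5)$ with $\zeta_5$ a primitive fifth root of unity, and $G=X_1^2-\zeta_5 X_2$, so $e_1=2$ and $e_2=1$. Neither $w_1/w_2=1/(2\zeta_5)$ nor $w_2/w_1=2\zeta_5$ is a root of unity, so $D=1$, and $j_0=1$ satisfies every stated hypothesis ($w_1=2$ is not a root of unity, $a_1\ne0$, $e_1\ne0$, and $w_2/w_1$ is not a root of unity). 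Nevertheless $\beta_1/\beta_2=w_1^2/w_2=\zeta_5^{-1}$ is a primitive fifth root of unity, so the hypothesis of Lemma~\ref{LRS3} (some $\beta_{i_0}$ with $\beta_{i_0}/\beta_j$ never torsion) fails. And indeed
\begin{equation*}
G\bigl(\Phi^{(n)}(\w)\bigr)=2^{2^{n+1}}\bigl(1-\zeta_5^{\,2^n+1}\bigr),
\end{equation*}
which vanishes precisely when $2^n\equiv 4\pmod 5$, i.e.\ for every $n\equiv 2\pmod 4$, so $\S_{\w}(\Phi,V)$ is infinite and the claimed bound cannot hold. The hypotheses and the definition of $D$ must be rephrased in terms of the $w_j^{e_j}$ (require $w_j^{e_j}/w_{j_0}^{e_{j_0}}$ never a root of unity for $j\ne j_0$, and let $D$ be generated by the torsion quotients $w_i^{e_i}/w_j^{e_j}$); with that correction your argument, and the paper's, go through verbatim because Lemma~\ref{LRS3} and Corollary~\ref{cor:LRS} then apply with the right period.
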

\begin{proof}
Applying the same arguments as in the proof of Theorem \ref{thm:power d}, the desired result follows directly from Lemma \ref{LRS3} and Corollary \ref{cor:LRS}.
\end{proof}

The upper bound in Theorem \ref{thm:pd} is not uniform because of the quantity $D$. However, we can make it uniform in some sense. In fact, if we choose the point $\w \in K^m$, where $K$ is a number field, then  $D$ does not exceed the number of roots of unity contained in $K$. Alternatively, one can also use \eqref{Dubickas}.

We also want to indicate that in Theorem \ref{thm:pd} if we further assume that $\vec{w}\in \ov{\Q}^m$ and $G\in \ov{\Q}[X_1,\ldots,X_m]$, then under the same assumptions as in Lemma \ref{LRS4}, we can get another upper bound for $|\S_{\vec{w}}(\Phi,V)|$.

Obtaining results on the size of $\S_{\w}(\Phi,V)$ even for the slightly more general case when $F_i=X_i^{d_i}$, $i=1,\ldots,m$, where the degrees $d_i$ are not necessarily the same, seems not to be quite straightforward.

\begin{theorem}
\label{thm:power di}
Let $\Phi=\(X_1^{d_1},\ldots,X_m^{d_m}\)$ with integers $d_i\ge 2$ ($1\le i \le m$). Fix a hypersurface $V=Z(G)$, where
$$
G=\sum_{i_1,\ldots,i_m}a_{i_1,\ldots,i_m}X_1^{i_1}\cdots X_m^{i_m}\in\C[X_1,\ldots,X_m]
$$
with $G\ne 0$ and $i_j\ge 0$ for $1\le j \le m$.
Then, for any $\vec{w}\in(\C^*)^m$ with multiplicatively independent coordinates,  we have
$$
| \S_{\vec{w}}(\Phi,V) | \le (0.5\n(G))^{\n(G)}(8\n(G))^{4\n(G)(\n(G)-1)^4(m+1)}.
$$
\end{theorem}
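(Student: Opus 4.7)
The plan is to reduce $|\S_{\vec{w}}(\Phi,V)|$ to counting solutions of a linear equation in a finitely generated subgroup of $(\C^*)^k$ and then invoke Corollary~\ref{cor:Sunit}, while separately verifying that distinct iterates contribute distinct equivalence classes. Writing $\Phi^{(n)}(\vec{w}) = (w_1^{d_1^n},\ldots,w_m^{d_m^n})$ and enumerating the monomials of $G$ as $a_j X_1^{i_1^{(j)}}\cdots X_m^{i_m^{(j)}}$ for $j = 1,\ldots,k := \n(G)$, the condition $\Phi^{(n)}(\vec{w}) \in V$ becomes
$$
\sum_{j=1}^{k} a_j\, z_j(n) = 0, \qquad z_j(n) := \prod_{l=1}^{m} w_l^{i_l^{(j)} d_l^{n}}.
$$
Since $w_1,\ldots,w_m$ are multiplicatively independent, $\gen{w_1,\ldots,w_m}$ is free of rank $m$, and hence $\Gamma := \gen{w_1,\ldots,w_m}^k \subseteq (\C^*)^k$ is a finitely generated group of rank $km$ containing every tuple $(z_1(n),\ldots,z_k(n))$.

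Applying Corollary~\ref{cor:Sunit} with this $\Gamma$ and $r = km$ will then bound the number of solutions of $\sum_j a_j x_j = 0$ in $\Gamma$ up to weak proportionality by
$$
(0.5k)^k (8k)^{4(k-1)^4(k + km)} = (0.5k)^k (8k)^{4k(k-1)^4(m+1)},
$$
which matches the asserted bound. It thus suffices to show that distinct $n$'s produce tuples lying in distinct equivalence classes under weak proportionality.

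For this injectivity step, I would suppose $n_1 \ne n_2$ and that $(z_j(n_1))_j$ and $(z_j(n_2))_j$ are equivalent up to proportionality with respect to some partition $\cP$ of $\{1,\ldots,k\}$; as observed in the proof of Corollary~\ref{cor:Sunit}, each block $\lambda \in \cP$ must satisfy $|\lambda| \ge 2$ (otherwise a single-term equation $a_j x_j = 0$ would have no solution in $\Gamma$). Picking any $\lambda \in \cP$ and any $j_1, j_2 \in \lambda$ and equating the two proportionality ratios $z_{j_1}(n_1)/z_{j_1}(n_2)$ and $z_{j_2}(n_1)/z_{j_2}(n_2)$ yields
$$
\prod_{l=1}^{m} w_l^{(i_l^{(j_1)} - i_l^{(j_2)})(d_l^{n_1} - d_l^{n_2})} = 1.
$$
Multiplicative independence of $w_1,\ldots,w_m$ then forces $(i_l^{(j_1)} - i_l^{(j_2)})(d_l^{n_1} - d_l^{n_2}) = 0$ for every $l$, and since $d_l \ge 2$ with $n_1 \ne n_2$ gives $d_l^{n_1} \ne d_l^{n_2}$, we obtain $\vec{i}^{(j_1)} = \vec{i}^{(j_2)}$, i.e.\ $j_1 = j_2$, contradicting $|\lambda| \ge 2$.

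The main delicate point is precisely this injectivity step at the level of weak (rather than plain) proportionality: the partition $\cP$ may be non-trivial, so the contradiction has to be derivable for every possible block $\lambda$, and this succeeds only because the multiplicative independence of $\vec{w}$ together with the hypothesis $d_l \ge 2$ separates any two distinct iterates simultaneously in every coordinate.
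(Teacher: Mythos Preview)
Your proposal is correct and follows essentially the same approach as the paper's proof: both reduce the problem to counting solutions of a linear equation in the finitely generated group $\Gamma=\langle w_1,\ldots,w_m\rangle^{\n(G)}\subseteq(\C^*)^{\n(G)}$ of rank $m\,\n(G)$, apply Corollary~\ref{cor:Sunit}, and then verify injectivity of $n\mapsto(z_1(n),\ldots,z_k(n))$ at the level of weak proportionality via the ratio identity $\prod_l w_l^{(i_l^{(j_1)}-i_l^{(j_2)})(d_l^{n_1}-d_l^{n_2})}=1$ together with multiplicative independence and $d_l\ge 2$. Your write-up is in fact a bit more explicit than the paper's about how the ratio condition forces every block of the partition $\cP$ to be a singleton and hence leads to a contradiction.
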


\begin{proof}
Since the polynomial $G$ has $\n(G)$ monomials, we renumber the indices $(i_1,\ldots,i_m)$ as $1,2,\ldots, \n(G)$. If the index $(i_1,\ldots,i_m)$ corresponds to $j$ ($1\le j \le \n(G)$), then accordingly we write $a_{i_1,\ldots,i_m}$ and $X_1^{i_1}\cdots X_m^{i_m}$ as $b_j$ and $Y_j$, respectively. So, we have
$$
G=\sum_{j=1}^{\n(G)} b_jY_j.
$$

For the given point $\w$, bounding $|\S_{\vec{w}}(\Phi,V)|$ is exactly to bound the number of integers $n\ge 0$ such that $\Phi^{(n)}(\vec{w})\in V$, that is,
\begin{equation}
\label{eq:general1}
\sum_{j=1}^{\n(G)} b_jY_j\(\Phi^{(n)}(\vec{w})\)=0.
\end{equation}

Let $\Lambda$ be the group generated by the coordinates of $\w$, and let $\Gamma=\Lambda^{\n(G)}$. Since the rank of $\Lambda$ is $m$, the rank of $\Gamma$ is at most $m\n(G)$. In view of \eqref{eq:general1}, we consider the solutions $(x_1,\ldots,x_{\n(G)})$ of the equation
\begin{equation}
\label{eq:general2}
b_1x_1+\cdots + b_{\n(G)}x_{\n(G)}=0
\end{equation}
in $\Gamma$. By Corollary \ref{cor:Sunit}, the equation \eqref{eq:general2} has less than
$$
(0.5\n(G))^{\n(G)}(8\n(G))^{4(\n(G)-1)^4(\n(G)+m\n(G))}
$$
solutions in $\Gamma$ up to weak proportionality.

For any $1\le i \ne j \le \n(G)$, write $Y_i=X_1^{i_1}\cdots X_m^{i_m}$ and $Y_j=X_1^{j_1}\cdots X_m^{j_m}$. Suppose that there exist integers $n,k$ with $0\le n<k$ such that
$$
\frac{Y_i\(\Phi^{(k)}(\vec{w})\)}{Y_i\(\Phi^{(n)}(\vec{w})\)}=\frac{Y_j\(\Phi^{(k)}(\vec{w})\)}{Y_j\(\Phi^{(n)}(\vec{w})\)}.
$$
Then due to $\Phi^{(n)}=\(X_1^{d_1^n},\ldots,X_m^{d_m^n}\)$, we obtain
$$
w_1^{i_1(d_1^k-d_1^n)}\cdots w_m^{i_m(d_m^k-d_m^n)}=w_1^{j_1(d_1^k-d_1^n)}\cdots w_m^{j_m(d_m^k-d_m^n)}.
$$
Noticing that $d_\ell \ge 2$ ($1\le \ell \le m$) and the coordinates $w_1,\ldots,w_m$ are multiplicatively independent, we must have $i_1=j_1,\ldots, i_m=j_m$, which implies that $Y_i=Y_j$. This is a contradiction with $Y_i\ne Y_j$.

Hence, for any $1\le i \ne j \le \n(G)$ and any $0\le n < k$, we have 
$$
\frac{Y_i\(\Phi^{(k)}(\vec{w})\)}{Y_i\(\Phi^{(n)}(\vec{w})\)}\ne \frac{Y_j\(\Phi^{(k)}(\vec{w})\)}{Y_j\(\Phi^{(n)}(\vec{w})\)}.
$$

Thus, the number of those solutions of \eqref{eq:general2} with the form
\begin{equation}
\label{eq:solution}
\(Y_1\(\Phi^{(n)}(\vec{w})\),\ldots, Y_{\n(G)}\(\Phi^{(n)}(\vec{w})\)\) \quad (n=0,1,2,\ldots)
\end{equation}
is less than
$$
(0.5\n(G))^{\n(G)}(8\n(G))^{4\n(G)(\n(G)-1)^4(m+1)}.
$$
Notice that there is a one-to-one correspondence between the vectors \eqref{eq:solution} and the integers $n\ge 0$, we complete the proof.
\end{proof}

Now, we want to use Lemma \ref{thm:schsch} to give another method on  handling a special case in Theorem \ref{thm:power di}.

\begin{theorem}
\label{thm:pdi2}
Let $K$ be a number field of degree $d$. Let the system $\Phi=\(X_1^{d_1},\ldots,X_m^{d_m}\)$ with integers $d_i\ge 0$ ($1\le i \le m$) and some index $\ell$ such that $d_\ell\ge 2$. Fix a hypersurface $V=Z(G)$, where
$$
G=\sum_{i_1,\ldots,i_m}a_{i_1,\ldots,i_m}X_1^{i_1}\cdots X_m^{i_m}\in K[X_1,\ldots,X_m]
$$
with $G\ne 0$ and $i_j\ge 0$ for $1\le j \le m$,
such that $G$ has zero constant term. Suppose that for any two monomials $a_{i_1,\ldots,i_m}X_1^{i_1}\cdots X_m^{i_m}$ and $a_{j_1,\ldots,j_m}X_1^{j_1}\cdots X_m^{j_m}$ of $G$, we have $i_1\ne j_1, \ldots, i_m\ne j_m$.
Then, for any $\w \in (K^*)^m$ with multiplicatively independent coordinates,  we have
$$
| \S_{\vec{w}}(\Phi,V) | \le (0.5\n(G))^{\n(G)}2^{35B^3}d^{6B^2},
$$
where $B=\max(m,\n(G))$.
\end{theorem}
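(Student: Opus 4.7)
The plan is to rewrite the condition $\Phi^{(n)}(\w)\in V$ as a polynomial--exponential equation indexed by the vector $(d_1^n,\ldots,d_m^n)\in \Z^m$ and then to apply Lemma~\ref{thm:schsch}. First, enumerate the monomials of $G$ as $b_1 Y_1,\ldots,b_{\n(G)} Y_{\n(G)}$ with $Y_j = X_1^{i_1^{(j)}}\cdots X_m^{i_m^{(j)}}$, and set $\pmb{\alpha}_j = (w_1^{i_1^{(j)}},\ldots,w_m^{i_m^{(j)}})\in (K^*)^m$ and $\vec{x}_n = (d_1^n,\ldots,d_m^n)\in \Z^m$. Since $\Phi^{(n)}(\w) = (w_1^{d_1^n},\ldots,w_m^{d_m^n})$, a direct computation gives $Y_j(\Phi^{(n)}(\w)) = \pmb{\alpha}_j^{\vec{x}_n}$, so the condition $\Phi^{(n)}(\w)\in V$ is equivalent to
$$
\sum_{j=1}^{\n(G)} b_j \pmb{\alpha}_j^{\vec{x}_n} = 0.
$$

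Next I would verify the separation condition required by Lemma~\ref{thm:schsch}: for any $1\le i\ne j\le \n(G)$, the only $\vec{z}\in \Z^m$ satisfying $\pmb{\alpha}_i^{\vec{z}} = \pmb{\alpha}_j^{\vec{z}}$ is $\vec{z}=\vec{0}$. This equality is equivalent to $\prod_k w_k^{(i_k^{(i)}-i_k^{(j)})z_k} = 1$; multiplicative independence of $w_1,\ldots,w_m$ forces $(i_k^{(i)}-i_k^{(j)})z_k = 0$ for every $k$, and the hypothesis that distinct monomials of $G$ have coordinatewise-distinct exponent vectors gives $i_k^{(i)}\ne i_k^{(j)}$ for every $k$, so $\vec{z} = \vec{0}$. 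With this in hand, Lemma~\ref{thm:schsch} bounds the number of $\vec{x}\in \Z^m$ solving the displayed equation by $(0.5\n(G))^{\n(G)} 2^{35B^3} d^{6B^2}$, where $B = \max(m,\n(G))$, since all coefficients and coordinates of the $\pmb{\alpha}_j$ lie in $K$.

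Finally, to translate this bound on solutions $\vec{x}$ into a bound on $|\S_{\w}(\Phi,V)|$, I would show that the map $n\mapsto \vec{x}_n$ is injective. Since $d_\ell\ge 2$, the $\ell$-th coordinate $d_\ell^n$ is strictly increasing in $n$, so distinct $n$ produce distinct $\vec{x}_n$, and each $n\in \S_{\w}(\Phi,V)$ contributes a distinct solution of the exponential equation. The claimed bound then follows immediately.

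The main obstacle I anticipate is the verification of the separation condition on the family $\{\pmb{\alpha}_j\}$, which is the step where both hypotheses of the theorem enter in an essential way: the multiplicative independence of the coordinates of $\w$ is used to convert the multiplicative relation into a linear one on exponents, and the coordinatewise-distinctness of the exponent vectors of $G$ is then precisely what rules out any non-trivial $\vec{z}$. Note that if $G$ had a non-zero constant term, the corresponding vector $(1,\ldots,1)$ would violate this separation condition against monomials with any vanishing exponent, which is why the zero constant term hypothesis plays a role. The remainder of the argument is bookkeeping and a direct appeal to Lemma~\ref{thm:schsch}.
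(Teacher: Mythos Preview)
Your proof is correct and follows essentially the same approach as the paper: rewrite the orbit condition as a polynomial--exponential equation in the vector $\vec{x}_n=(d_1^n,\ldots,d_m^n)$, verify the separation hypothesis of Lemma~\ref{thm:schsch} using multiplicative independence together with the coordinatewise-distinct exponents, apply the lemma, and use $d_\ell\ge 2$ to ensure injectivity of $n\mapsto\vec{x}_n$. One minor quibble: your closing remark about the zero constant term is not quite the right explanation (under the coordinatewise-distinctness hypothesis a constant term would in fact force all other monomials to have \emph{no} vanishing exponent), but this side comment does not affect the validity of the main argument.
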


\begin{proof}
Since $\Phi^{(n)}=\(X_1^{d_1^n},\ldots,X_m^{d_m^n}\)$, we need to
bound the number of integers $n\ge 0$ such that
$$
\sum_{i_1,\ldots,i_m}a_{i_1,\ldots,i_m}w_1^{i_1d_1^n}\cdots w_m^{i_md_m^n}=0.
$$
For each index $(i_1,\ldots,i_m)$, we write $\pmb{\alpha}_i=\(w_1^{i_1},\ldots,w_m^{i_m}\)$ and $\vec{x}=\(d_1^n,\ldots,d_m^n\)$, then the above equation becomes
\begin{equation}
\label{eq:exp}
\sum_{i_1,\ldots,i_m}a_{i_1,\ldots,i_m}\pmb{\alpha}_i^{\vec{x}}=0.
\end{equation}
Moreover, for any two indices $(i_1,\ldots,i_m)$ and $(j_1,\ldots,j_m)$, if $\pmb{\alpha}_i^\vec{z}=\pmb{\alpha}_j^\vec{z}$ for $\vec{z}=(z_1,\ldots,z_m)\in \Z^m$,
then, we must have $i_1z_1=j_1z_1, \ldots, i_mz_m=j_mz_m$ by using the multiplicative independence condition. Under the assumption that $i_1\ne j_1,\ldots, i_m\ne j_m$, we get $z_1=\cdots=z_m=0$. That is, $\vec{z}$ is the zero vector.

Now, applying Lemma \ref{thm:schsch} to the equation \eqref{eq:exp} we know that
the equation \eqref{eq:exp} has less than
$$
(0.5\n(G))^{\n(G)}2^{35B^3}d^{6B^2}
$$
solutions with  the form $(d_1^n,\ldots,d_m^n), n=0,1,2,\ldots$.
Besides, since $d_\ell \ge 2$, we have $\(d_1^n,\ldots,d_m^n\) \ne \(d_1^k,\ldots,d_m^k\)$ for any integers $n\ne k$. So, the desired result follows.
\end{proof}

It seems natural to expect that the classes of dynamical systems and hypersurfaces that satisfy the uniform boundedness condition are quite wide. We confirm this by the following three theorems.

\begin{theorem}
\label{thm:pdi3}
Let  $\Phi=\(X_1^{d},F_2,F_3,\ldots,F_m\)$ with integer $d\ge 2$, where
$$
F_i=X_1^{s_{i1}}\cdots X_m^{s_{im}}
$$
with $s_{ij}\ge 0,s_{ii}\ge 1, 2\le i \le m, 1\le j \le m$, such that $ \deg F_i < d$ for any $2\le i \le m$. 
Fix a hypersurface $V=Z(G)$, where
$$
G=aX_1^{e}+\sum_{i_1,\ldots,i_m}a_{i_1,\ldots,i_m}X_1^{i_1}\cdots X_m^{i_m}\in\C[X_1,\ldots,X_m]
$$
with $a\ne 0,  e\ge 1$ and $i_j\ge 0$ for $1\le j \le m$. Suppose that  $\deg G = e$. 
Then, for any $\vec{w}\in(\C^*)^m$ with multiplicatively independent coordinates,  
we have
$$
| \S_{\vec{w}}(\Phi,V) | \le (0.5\n(G))^{\n(G)}(8\n(G))^{4\n(G)(\n(G)-1)^4(m+1)}.
$$
\end{theorem}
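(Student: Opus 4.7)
The plan mirrors the proof of Theorem~\ref{thm:power di}, but anchors the resulting $S$-unit equation on the leading monomial $X_1^e$. This anchor is necessary because the direct pairwise-ratio argument of Theorem~\ref{thm:power di} can fail under the present hypotheses: for instance, when $F_\ell=X_\ell$ for every $\ell\ge 2$, the ratio of two non-leading monomials in the lower variables remains constant along the orbit.

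Enumerate the monomials of $G$ as $Y_1=X_1^e$ (with coefficient $a$) and $Y_2,\dots,Y_{\n(G)}$ (with non-zero coefficients $b_2,\dots,b_{\n(G)}$). Each iterate $F_\ell^{(n)}=\prod_r X_r^{t_{\ell r}^{(n)}}$ is a monomial, and $Y_1(\Phi^{(n)}(\vec{w}))=w_1^{e d^n}$. Setting $z_j^{(n)}:=Y_j(\Phi^{(n)}(\vec{w}))/w_1^{e d^n}\in\Lambda:=\gen{w_1,\dots,w_m}$, the condition $\Phi^{(n)}(\vec{w})\in V$ becomes
$$
a\cdot 1+\sum_{j=2}^{\n(G)} b_j\,z_j^{(n)}=0.
$$
Regarding this as an $S$-unit equation in $\n(G)$ variables with solutions $(1,z_2^{(n)},\dots,z_{\n(G)}^{(n)})$ inside $\Gamma:=\Lambda^{\n(G)}$, which is finitely generated of rank $m\n(G)$, Corollary~\ref{cor:Sunit} bounds the number of equivalence classes up to weak proportionality by precisely the quantity asserted in the theorem.

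It remains to show that distinct $n$ give distinct equivalence classes. Assume that for some $0\le n<k$ a partition $\cP$ witnesses weak proportionality; each class has size at least $2$, since no singleton $\{i\}$ can satisfy a subsum with non-zero coefficient. The class $\lambda_0\ni 1$ has proportionality constant $c_{\lambda_0}=1$ (both solutions have first coordinate $1$), so any $j\in\lambda_0$ with $j\ne 1$ (which exists because $|\lambda_0|\ge 2$) must satisfy $z_j^{(k)}=z_j^{(n)}$, equivalently $Y_j(\Phi^{(k)}(\vec{w}))/Y_j(\Phi^{(n)}(\vec{w}))=w_1^{e(d^k-d^n)}$. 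Writing $Y_j=X_1^{i_1}\cdots X_m^{i_m}$ and comparing $w_1$-exponents via the multiplicative independence of $w_1,\dots,w_m$ reduces this to
$$
(e-i_1)(d^k-d^n)=\sum_{\ell\ge 2} i_\ell\bigl(t_{\ell 1}^{(k)}-t_{\ell 1}^{(n)}\bigr).
$$

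The main obstacle is contradicting this last identity. A short induction on $n$ using $\deg F_\ell\le d-1$ yields $t_{\ell 1}^{(n)}\le(d-1)d^{n-1}$ for every $\ell\ge 2$, and since $(d-1)d^{k-1}\le d^k-d^n$ whenever $n\le k-1$ one gets $t_{\ell 1}^{(k)}-t_{\ell 1}^{(n)}\le d^k-d^n$. The hypothesis $s_{\ell\ell}\ge 1$ further forces $s_{\ell 1}\le\deg F_\ell-s_{\ell\ell}\le d-2$, which feeds back into the induction to yield the \emph{strict} inequality $t_{\ell 1}^{(k)}-t_{\ell 1}^{(n)}<d^k-d^n$ for all $\ell\ge 2$ and $n\le k-1$. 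Now, if $\deg Y_j=e$ then $Y_j\ne X_1^e$ implies $e-i_1=\sum_{\ell\ge 2}i_\ell>0$, and the strict bound makes the right side strictly less than the left; if $\deg Y_j<e$, then the integer gap $e-i_1\ge\sum_{\ell\ge 2}i_\ell+1$ already makes the left exceed the right under the non-strict bound. In either case the identity fails, contradicting the assumption, and distinct $n$ indeed yield inequivalent solutions.
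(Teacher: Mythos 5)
Your proof is correct and follows essentially the same route as the paper's: both anchor the weak-proportionality argument of Corollary~\ref{cor:Sunit} on the leading monomial $Y_1=X_1^e$, and use the hypotheses $\deg F_i<d$ (plus $s_{ii}\ge 1$) and $\deg G=e$ to show that the $w_1$-exponent of $Y_j\(\Phi^{(k)}(\vec{w})\)/Y_j\(\Phi^{(n)}(\vec{w})\)$ is strictly smaller than $e(d^k-d^n)$ for every $j\ge 2$ and $0\le n<k$, which by multiplicative independence forces distinct iterates into distinct equivalence classes. Your normalization by $w_1^{ed^n}$ and your explicit analysis of the partition block containing index $1$ (that it has size at least two with proportionality constant $1$) spell out in detail a step the paper compresses into ``Hence as before''; the underlying argument is the same.
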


\begin{proof}
As in the proof of Theorem \ref{thm:power di} and under the assumptions of the polynomial $G$, we can write
$$
G=\sum_{j=1}^{\n(G)} b_jY_j
$$
such that $b_1=a,Y_1=X_1^{e}$.
We need to
bound the number of integers $n\ge 0$ such that
$$
\sum_{j=1}^{\n(G)} b_jY_j\(\Phi^{(n)}(\vec{w})\)=0.
$$

For any $n\ge 0$, as in Section \ref{sec:notation} we write $\Phi^{(n)}=(F_1^{(n)}, F_2^{(n)},\ldots, F_m^{(n)})$ with $F_1^{(n)}=X_1^{d^n}$. Since $d> \deg F_i$ for any $2\le i \le m$, we can see that 
\begin{equation}
\label{eq:degree}
\deg F_i^{(n)} < d^n, \quad \textrm{for any $n\ge 1$} .
\end{equation}
Fix $2\le i\le m$. Then, for any $n\ge 0$, since 
$$
\Phi^{(n+1)}=\Phi(F_1^{(n)}, F_2^{(n)},\ldots, F_m^{(n)}) \mand s_{ii} \ge 1,
$$
applying \eqref{eq:degree} we deduce that 
\begin{align*}
& \deg_{X_1} F_i^{(n+1)} - \deg_{X_1} F_i^{(n)} \\
& \quad = \sum_{j=1, j\ne i}^m s_{ij}\deg_{X_1} F_j^{(n)} + (s_{ii}-1) \deg_{X_1} F_i^{(n)}\\
& \quad < d^n \big(\sum_{j=1}^{m} s_{ij} -1 \big) < d^n (d-1).
\end{align*}
So, for any integers $n,k$ with $0\le n<k$, we obtain  
\begin{equation}
\label{eq:degX1}
0\le \deg_{X_1} F_i^{(k)} - \deg_{X_1} F_i^{(n)} < d^k - d^n.
\end{equation}

Note that for any integers $n,k$ with $0\le n<k$, we have 
$$
\frac{Y_1\(\Phi^{(k)}(\vec{w})\)}{Y_1\(\Phi^{(n)}(\vec{w})\)}
=w_1^{e(d^k-d^n)}.
$$
Combining \eqref{eq:degX1} with $\deg G =e$, we can see that 
 for any $2\le j \le \n(G)$, the degree of $Y_j\(\Phi^{(k)}(\vec{w})\)/Y_j\(\Phi^{(n)}(\vec{w})\)$ with respect to $w_1$ is less than $e(d^k-d^n)$. Notice that the coordinates of $\vec{w}$ are multiplicatively independent. Thus, for any $2\le j \le \n(G)$ we have
$$
\frac{Y_1\(\Phi^{(k)}(\vec{w})\)}{Y_1\(\Phi^{(n)}(\vec{w})\)} \ne \frac{Y_j\(\Phi^{(k)}(\vec{w})\)}{Y_j\(\Phi^{(n)}(\vec{w})\)}.
$$

Hence as before, the equation $b_1x_1+\ldots+b_{\n(G)}x_{\n(G)}=0$ has less than
$$
(0.5\n(G))^{\n(G)}(8\n(G))^{4\n(G)(\n(G)-1)^4(m+1)}
$$
solutions with the form
$$
\(Y_1\(\Phi^{(n)}(\vec{w})\),\ldots, Y_{\n(G)}\(\Phi^{(n)}(\vec{w})\)\) \quad (n=0,1,2,\ldots).
$$
Finally, the desired result follows from the one-to-one correspondence between the above vectors and the integers $n\ge 0$.
\end{proof}

\begin{theorem}
\label{thm:Fi1}
Let $\Phi=(F_1,\ldots,F_m)$ with $m\ge 2$ be defined by
$$
F_i=X_i^{s_i}X_{i+1}^{s_{i,i+1}}\cdots X_m^{s_{im}},
$$
with
$$
s_i>1,s_{ij}\ge 0,\quad j=i+1,\ldots,m,
$$
or
$$
s_i\ge 1,s_{ij}\ge 1 \textrm{ for at least one } j=i+1,\ldots,m,
$$
$i=1,\ldots,m-1$,
and
$$
F_m=X_m.
$$
Fix a hypersurface $V=Z(G)$, where
$$
G=\sum_{i_1,\ldots,i_m}a_{i_1,\ldots,i_m}X_1^{i_1}\cdots X_m^{i_m}\in \C[X_1,\ldots,X_m]
$$
with only one monomial of the form $cX_m^{e_m}, c\in \C^*,e_m\ge 1$, such that $G$ has a monomial divisible by $X_j$ for some $1\le j \le m-1$ and zero constant term. 
Then, for any $\w\in (\C^*)^m$ with multiplicatively independent coordinates, we have
$$|\S_{\vec{w}}(\Phi,V)|<(0.5\n(G))^{\n(G)}(8\n(G))^{4\n(G)(\n(G)-1)^4(m+1)}.$$
\end{theorem}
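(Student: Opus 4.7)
The plan is to mimic the proofs of Theorems~\ref{thm:power di} and~\ref{thm:pdi3} via Corollary~\ref{cor:Sunit}. I would rewrite $G$ as a sum of its monomials, $G=\sum_{j=1}^{\n(G)}b_jY_j$, taking $Y_1=X_m^{e_m}$ to be the unique pure power of $X_m$ granted by the hypothesis, so that $b_1\neq 0$. The set $\S_{\vec{w}}(\Phi,V)$ then consists of those $n\ge 0$ for which $(Y_j(\Phi^{(n)}(\vec{w})))_j$ solves the linear equation
$$
b_1x_1+\cdots+b_{\n(G)}x_{\n(G)}=0
$$
in $\Gamma=\Lambda^{\n(G)}\subseteq(\C^*)^{\n(G)}$, where $\Lambda=\gen{w_1,\ldots,w_m}\subseteq\C^*$ has rank $m$ by multiplicative independence. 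Corollary~\ref{cor:Sunit} with $k=\n(G)$ and rank $r\le m\n(G)$ then gives the stated upper bound on the number of weak-proportionality classes, so it remains to check that distinct values of $n$ yield distinct classes.

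Since $F_m=X_m$ forces $F_m^{(n)}=X_m$ for every $n$, one has $Y_1(\Phi^{(n)}(\vec{w}))=w_m^{e_m}$, which is a non-zero constant in $n$; in particular the ratio $Y_1(\Phi^{(k)}(\vec{w}))/Y_1(\Phi^{(n)}(\vec{w}))$ equals $1$. The \emph{core claim} I need is that for each $j\ge 2$ and any $0\le n<k$ one has $Y_j(\Phi^{(k)}(\vec{w}))/Y_j(\Phi^{(n)}(\vec{w}))\neq 1$. Granting this, for any partition $\cP$ of $\{1,\ldots,\n(G)\}$ the block $\lambda_0\in\cP$ containing the index $1$ must consist only of indices whose ratios equal $1$ (forced by proportionality within $\lambda_0$ together with $Y_1$ being constant), whence $\lambda_0=\{1\}$; the sub-equation $b_1x_1=0$ would then yield $w_m^{e_m}=0$, contradicting $w_m\neq 0$. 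Hence solutions at distinct $n$ are not weakly proportional.

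To prove the core claim I would encode the pullback of monomials under $\Phi$ as a lower triangular integer matrix $T\colon\Z^m\to\Z^m$ with diagonal $(s_1,\ldots,s_m)$; writing $\vec{v}_j\in\Z_{\ge 0}^m$ for the exponent vector of $Y_j$, one has $Y_j(\Phi^{(n)}(\vec{w}))=\vec{w}^{T^n\vec{v}_j}$. Because the diagonal entries are $\ge 1$, $T$ is invertible over $\Q$, so the equality $Y_j(\Phi^{(k)}(\vec{w}))=Y_j(\Phi^{(n)}(\vec{w}))$ is, by multiplicative independence, equivalent to $T^{k-n}\vec{v}_j=\vec{v}_j$. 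Since all eigenvalues of $T$ are real and $\ge 1$, the only root-of-unity eigenvalue is $1$; a standard expansion of $T^\ell=(I+N)^\ell$ on the generalized $1$-eigenspace (with $N$ nilpotent there) shows that such periodic vectors are already fixed, so the core claim reduces to $\vec{v}_j\notin\ker(T-I)$. The main obstacle will be the structural assertion
$$
\ker(T-I)\cap\Z_{\ge 0}^m=\Z_{\ge 0}\vec{e}_m.
$$
For a non-negative fixed $\vec{u}$, the $i$-th row of $T\vec{u}=\vec{u}$ rearranges to $\sum_{j<i}s_{ji}u_j=(1-s_i)u_i$ with left side $\ge 0$ and right side $\le 0$, so both sides vanish; if $l^*=\min\{l:u_l>0\}$ were at most $m-1$, then $s_{l^*}=1$, but the hypothesis provides some $j^*>l^*$ with $s_{l^*,j^*}\ge 1$, contradicting $s_{l^*,j^*}u_{l^*}=0$. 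Since each $Y_j$ with $j\ge 2$ has a positive exponent in some $X_l$ with $l\le m-1$, $\vec{v}_j$ is not a non-negative multiple of $\vec{e}_m$, completing the argument.
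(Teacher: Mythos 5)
Your proof is correct, and in one respect it is more careful than the paper's. For the ``core claim'' that $Y_j(\Phi^{(k)}(\vec{w}))\ne Y_j(\Phi^{(n)}(\vec{w}))$ for $j\ge 2$ and $n<k$, the paper argues more elementarily: from the recurrence for $d_{i,n}=\deg F_i^{(n)}$ it proves by induction that $\deg F_i^{(n)}$ is \emph{strictly increasing} in $n$ for $1\le i\le m-1$ (its ``for any $1\le i\le m$'' is a small slip, since $F_m^{(n)}=X_m$ has constant degree), whence $\deg\bigl(Y_j\circ\Phi^{(n)}\bigr)$ strictly increases because $Y_j$ involves some $X_l$ with $l\le m-1$; multiplicative independence then separates the values. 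Your route through the lower-triangular matrix $T$, the Jordan block at eigenvalue $1$, and the computation $\ker(T-I)\cap\Z_{\ge0}^m=\Z_{\ge0}\vec{e}_m$ is heavier machinery for the same conclusion, but it is correct (note $T=S^{\mathsf T}$ with $S=(s_{ij})$ upper triangular, diagonal $(s_1,\ldots,s_{m-1},1)$, so $T$ is indeed invertible lower triangular with nonnegative integer eigenvalues).

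Where you genuinely improve on the paper is the step from the core claim to ``distinct $n$ give distinct weak-proportionality classes.'' The paper writes ``similar as before,'' pointing to Theorem~\ref{thm:power di}; but there the argument rests on \emph{pairwise} distinct ratios $Y_i(\Phi^{(k)})/Y_i(\Phi^{(n)})\ne Y_j(\Phi^{(k)})/Y_j(\Phi^{(n)})$ for all $i\ne j$, and that can fail here. For example, with $m=2$, $\Phi=(X_1^2X_2,\;X_2)$ and $G=X_2^5+X_1X_2+X_1X_2^2$, the monomials $Y_2=X_1X_2$ and $Y_3=X_1X_2^2$ satisfy $Y_3/Y_2=X_2$, which $\Phi$ fixes, so their ratios agree for every $n,k$. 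Your observation that it suffices to look at the block containing the index $1$ --- the proportionality constant on that block is forced to be $1$ (since $Y_1(\Phi^{(n)}(\vec{w}))=w_m^{e_m}$ is constant), the core claim then forces that block to be $\{1\}$, and a singleton block gives the impossible relation $b_1w_m^{e_m}=0$ --- is exactly the right fix, and is what the paper must have implicitly intended. So: correct proof, same overall strategy, a somewhat heavier linear-algebra detour for the core claim, and a welcome precision in the weak-proportionality step.
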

\begin{proof}
As in the proof of Theorem \ref{thm:power di} and under the assumptions,  we can write
$$
G=\sum_{j=1}^{\n(G)} b_jY_j
$$
such that $b_1=c, Y_1=X_m^{e_m}$ and $Y_2$ is not a constant.

For the given point $\w$, noticing $F_m^{(n)}(\w)=w_m$ for any $n\ge 0$, we need to bound the number of integers $n\ge 0$ such that
$$
b_1w_m^{e_m}+\sum_{j=2}^{\n(G)} b_jY_j\(\Phi^{(n)}(\vec{w})\)=0
$$

We first claim that if $n\ne k$, then $Y_2\(\Phi^{(n)}(\vec{w})\)\ne Y_2\(\Phi^{(k)}(\vec{w})\)$. 
Indeed, assume that $n<k$. We note that, by the conditions on $s_i$ and $s_{ij}$ and the fact that the degree $d_{i,n}=\deg F_i^{(n)}$ satisfies
$$
d_{i,n}=s_{i}d_{i,n-1}+s_{i,i+1}d_{i+1,n-1}+\cdots +s_{i,m-1}d_{m-1,n-1}+s_{im},
$$
by induction one easily proves that $\deg F_i^{(n)}<\deg F_i^{(k)}$ for any $1\le i \le m$ (the case $s_i=1$ for all $i=1,\ldots,m-1$ is also proved in~\cite[Lemma 1]{OstShp}).
Thus, by the multiplicative independence of the coordinates of $\w$, we deduce that $Y_2\(\Phi^{(n)}(\vec{w})\)\ne Y_2\(\Phi^{(k)}(\vec{w})\)$.

Hence, similar as before the equation $b_1x_1+\ldots+b_{\n(G)}x_{\n(G)}=0$ has less than
$$
(0.5\n(G))^{\n(G)}(8\n(G))^{4\n(G)(\n(G)-1)^4(m+1)}
$$
solutions with the form
$$
\(w_m^{e_m},Y_2\(\Phi^{(n)}(\vec{w})\),\ldots, Y_{\n(G)}\(\Phi^{(n)}(\vec{w})\)\) \quad (n=0,1,2,\ldots).
$$
Now, the desired result follows as before. 
\end{proof}

\begin{theorem}
\label{thm:Fi2}
Let $\Phi=\(F_1,\ldots,F_m\)$ be defined by
$$
F_i=\prod_{j=1}^{m}X_j^{s_{ij}},\quad  s_{ij}\ge 0
$$
for $i=1,\ldots,m$ and $j=1,\ldots,m$ such that for any $1\le i \le m$ the degree $\deg F_i\ge 2$.
Fix a hypersurface $V=Z(G)$, where
$$
G=\sum_{i_1,\ldots,i_m}a_{i_1,\ldots,i_m}X_1^{i_1}\cdots X_m^{i_m}\in \C[X_1,\ldots,X_m]
$$
with a non-zero constant term $c$.
Then, for any $\w\in (\C^*)^m$ with multiplicatively independent coordinates, we have
$$|\S_{\vec{w}}(\Phi,V)| < (0.5\n(G))^{\n(G)}(8\n(G))^{4\n(G)(\n(G)-1)^4(m+1)}.$$
\end{theorem}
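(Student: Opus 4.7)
The plan is to follow the template of the proofs of Theorems~\ref{thm:Fi1} and \ref{thm:power di}. Write
$$
G = b_1 Y_1 + b_2 Y_2 + \cdots + b_{\n(G)} Y_{\n(G)}
$$
with $Y_1 = 1$ the constant monomial and $b_1 = c \ne 0$, and let $Y_j$ for $j \ge 2$ enumerate the non-constant monomials of $G$. Since $Y_1(\Phi^{(n)}(\vec{w})) = 1$ for every $n$, each $n \in \S_{\vec{w}}(\Phi,V)$ yields a solution
$$
\bigl(1,\, Y_2(\Phi^{(n)}(\vec{w})),\, \ldots,\, Y_{\n(G)}(\Phi^{(n)}(\vec{w}))\bigr) \in \Gamma
$$
of the linear equation $b_1 x_1 + \cdots + b_{\n(G)} x_{\n(G)} = 0$, where $\Gamma = \Lambda^{\n(G)}$ and $\Lambda \subset \C^*$ is the multiplicative group generated by $w_1,\ldots,w_m$, which has rank $m$ by the hypothesis on $\vec{w}$. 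Corollary~\ref{cor:Sunit} then bounds the number of such solutions up to weak proportionality by exactly the quantity claimed in the theorem.

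The remaining task is to verify that distinct iterates yield \emph{inequivalent} solutions. If $n \ne k$ and the two solution vectors were equivalent with respect to some suitable partition $\cP$ (every part of cardinality at least $2$), then the part $\lambda_1$ containing the index $1$ also contains some index $j \ge 2$; since the first coordinate of both solutions equals $1$, the proportionality constant on $\lambda_1$ must equal $1$, and therefore $Y_j(\Phi^{(n)}(\vec{w})) = Y_j(\Phi^{(k)}(\vec{w}))$. Hence it suffices to prove the following key claim: for every non-constant monomial $Y = \prod_\ell X_\ell^{e_\ell}$ appearing in $G$, the values $Y(\Phi^{(n)}(\vec{w}))$ are pairwise distinct for $n = 0,1,2,\ldots$.

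Let $S = (s_{ij}) \in \Z_{\ge 0}^{m\times m}$ be the exponent matrix of $\Phi$. Iteration shows $F_i^{(n)} = \prod_\ell X_\ell^{(S^n)_{i\ell}}$, and so
$$
Y(\Phi^{(n)}(\vec{w})) = \prod_p w_p^{(\vec{e}\, S^n)_p}, \qquad \vec{e} = (e_1,\ldots,e_m) \ne 0,\ e_\ell \ge 0.
$$
By multiplicative independence of $w_1,\ldots,w_m$, distinctness reduces to showing the row vectors $\vec{e}\, S^n \in \Z_{\ge 0}^m$ are pairwise distinct; for this it suffices to check that the total degree $\vec{e}\, S^n \vec{1}$ (with $\vec{1} = (1,\ldots,1)^T$) is strictly increasing. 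The hypothesis $\deg F_i \ge 2$ reads exactly as $S\vec{1} \ge 2\vec{1}$ coordinatewise. A short induction, using that $S$ is entrywise nonnegative with no zero row (each row sums to at least $2$), shows that $\vec{e}\, S^n$ remains nonnegative with at least one positive entry for every $n$; combining this with $S\vec{1} \ge 2\vec{1}$ gives
$$
\vec{e}\, S^{n+1}\vec{1} \;=\; (\vec{e}\, S^n)(S\vec{1}) \;\ge\; 2\,\vec{e}\, S^n\vec{1} \;\ge\; 2^{n+1}(e_1 + \cdots + e_m) \;>\; 0,
$$
so $\vec{e}\, S^n\vec{1}$ at least doubles at every step, settling the claim.

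The main obstacle is purely bookkeeping: one has to unpack weak proportionality carefully enough to see that the constant monomial, by pinning the first coordinate to $1$, forces the proportionality constant on $\lambda_1$ to equal $1$. Once that observation is in place, the uniform lower bound $\deg F_i \ge 2$ makes the degree-growth argument immediate, and the rest is a direct application of Corollary~\ref{cor:Sunit}.
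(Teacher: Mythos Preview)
Your proof is correct and follows essentially the same route as the paper's own argument: write $Y_1=1$, observe that for every non-constant monomial $Y_j$ the degree of $Y_j\bigl(\Phi^{(n)}\bigr)$ is strictly increasing in $n$ (the paper phrases this as $\deg F_i^{(n+1)}>\deg F_i^{(n)}$, which follows by a one-line induction from $\deg F_i\ge 2$; your formulation via $\vec{e}\,S^n\vec{1}\ge 2^n\,\vec{e}\,\vec{1}$ is an equivalent and arguably cleaner way to see the same growth), deduce by multiplicative independence that the $Y_j\bigl(\Phi^{(n)}(\vec w)\bigr)$ are pairwise distinct, and then invoke Corollary~\ref{cor:Sunit}. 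Your explicit unpacking of weak proportionality---noting that the block containing the index $1$ must have proportionality constant $1$---fills in a step the paper leaves as ``as before''; otherwise the two proofs coincide.
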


\begin{proof}
As in the proof of Theorem \ref{thm:power di}, we can write
$$
G=\sum_{j=1}^{\n(G)} b_jY_j
$$
with $b_1=c$ and $Y_1=1$. What we need is to
bound the number of integers $n\ge 0$ such that
$$
\sum_{j=1}^{\n(G)} b_jY_j\(\Phi^{(n)}(\vec{w})\)=0.
$$

Since $\deg F_i \ge 2$ for any $1\le i \le m$, we know that
$$
\deg F_i^{(n+1)} > \deg F_i^{(n)}, \quad \textrm{for any $n\ge 0$}.
$$
So, in view of the multiplicative independence of the coordinates of $\vec{w}$, for any integers $n,k$ with $0\le n<k$, we have 
$$
\frac{Y_j\(\Phi^{(k)}(\vec{w})\)}{Y_j\(\Phi^{(n)}(\vec{w})\)}\ne 1, \quad \textrm{for any $2\le j \le \n(G)$}.
$$

Thus as before, the equation $b_1x_1+\ldots+b_{\n(G)}x_{\n(G)}=0$ has less than
$$
(0.5\n(G))^{\n(G)}(8\n(G))^{4\n(G)(\n(G)-1)^4(m+1)}
$$
solutions with the form
$$
\(1,Y_2\(\Phi^{(n)}(\vec{w})\),\ldots, Y_{\n(G)}\(\Phi^{(n)}(\vec{w})\)\) \quad (n=0,1,2,\ldots).
$$
We conclude the proof by noticing the one-to-one correspondence between the above vectors and the integers $n\ge 0$.
\end{proof}

We remark that the assumption on $\Phi$ put in Theorem \ref{thm:Fi2} is reasonable. For example, let $m=2$ and fix such a point $\w$, choose $\Phi=\(X_1,X_2^2\)$ and $G=X_1-w_1$, then for any $n\ge 0$ we have $\Phi^{(n)}(\w)\in Z(G)$.



\section{Comments}
\label{sec:com}

It is quite sure that one can get more partial results concerning the quantitative dynamical Mordell-Lang conjecture by using the methods presented in Section \ref{sec:quan}. It is also likely that several upper bounds in Section \ref{sec:quan} can be improved in some special cases.

However,
the main method based on Corollary~\ref{cor:Sunit} requires that for each $\w \in (\C^*)^m$, all $\Phi^{(n)}(\w)$, $n\ge 1$,  are contained in the same finitely generated group $\Gamma$ of $(\C^*)^m$. Thus, we were able to ensure this property only for monomial systems.

The only non-monomial example for which one can obtain similar results as in Theorems~\ref{thm:power di}--\ref{thm:Fi2} is the following: let $\cF=\cA^{-1}\circ \Phi\circ \cA$, where $\cA$ is a polynomial automorphism and $\Phi$ is any monomial system defined in the results of Section \ref{sec:quan}. Then, for any $n\ge 1$, we have
$$
\cF^{(n)}=\cA^{-1}\circ \Phi^{(n)}\circ \cA.
$$
Thus, for a hypersurface $V=Z(G)$ and a point $\w\in(\C^*)^m$, the point $\cF^{(n)}(\w)\in V$ if and only if $\Phi^{(n)}(\vec{v})\in Z\(G(\cA^{-1})\)$, where $\vec{v}=\cA(\w)$. If the coordinates of $\vec{v}$ are multiplicatively independent, then one can obtain similar results as in Theorems~\ref{thm:power di}--\ref{thm:Fi2}.

It is worth remarking that our methods can also be employed to study the synchronized intersection of two orbits. Indeed,  let $\cF$ and $\cH$ be two polynomial systems from $\C^m$ to itself, then one can ask to bound the size of the subset of integers
$$
\left\{n\ge 0 : \cF^{(n)}(\vec{w}_1)=\cH^{(n)}(\vec{w}_2)\right\},
$$
 where $\vec{w}_1, \vec{w}_2$ are two vectors in $\C^m$. Now, we take $\Phi=(\cF,\cH)$ as in our results (but we see $\cF$ in variables $X_1,\ldots,X_m$ and $\cH$ in variables $Y_1,\ldots,Y_m$, so we have $2m$ variables), and $V=Z(G)$, where $G=\sum_{i=1}^m (X_i-Y_i)$. Clearly, we have
 $$
 \left\{ n\ge 0 : \cF^{(n)}(\vec{w}_1)=\cH^{(n)}(\vec{w}_2) \right\}  \subseteq \S_{(\vec{w}_1, \vec{w}_2)}(\Phi,V),
$$
where $\S_{(\vec{w}_1, \vec{w}_2)}(\Phi,V)$ is defined as in \eqref{eq:intervar}.
So, the problem turns out to bound the size $|S_{(\vec{w}_1, \vec{w}_2)}(\Phi,V)|$, which can be done in many cases by applying the methods in Section \ref{sec:quan}.

In Section \ref{sec:quan}, we consider uniform boundedness of the size of the intersection $\Orb_{\w}(\Phi) \cap V $ for various cases. One can also consider obtaining upper bounds under which there is indeed an integer $n$ such that $\Phi^{(n)}(\w)\in V$, or which are greater than any integer $n$ with $\Phi^{(n)}(\w)\in V$. For example, in Theorem \ref{thm:pd}, if we further assume that $|w_{j_0}| > |w_j|$ and $e_{j_0} \ge e_j$ for any $j \ne j_0$, and define
$$
a=\max_{j\ne j_0} |a_j| \mand \textrm{the index $i$ such that $\left|w_i^{e_i}\right|=\max_{j\ne j_0}\left|w_j^{e_j}\right|$},
$$
then we can easily get a lower bound of $n$ from the following inequality
$$
\left|a_{j_0}w_{j_0}^{e_{j_0}d^n}\right| > (m-1)a\left|w_i^{e_id^n}\right|
$$
such that this bound is larger than any integer $n$ with $\Phi^{(n)}(\w)\in V$.

We also note that if one imposes  in Theorem \ref{thm:power d} that
instead of the  multiplicative independence of the coordinates of
 $\vec{w}\in\C^m$, the absolute values of these  coordinates
 are multiplicatively independent, then there exists $I_0=(i_1,\ldots,i_m)$ such that $X_1^{i_1}\cdots X_m^{i_m}$ is a monomial of $G$ and
$$
\left|\vec{w}^{I_0}\right|> \left|\vec{w}^I\right|$$
with  $I=(j_1,\ldots,j_m)$ for any other monomial  $X_1^{j_1}\ldots X_m^{j_m}$
of $G$ (that is, for  $I \ne I_0$).  This also allows us to
obtain an explicit bound on the largest $n$ such that $\Phi^{(n)}(\w)\in V$.

\section*{Acknowledgements}

The authors would like to thank Igor E. Shparlinski for his valuable suggestions and stimulating discussions, and also for his comments on an early version of the paper. They are also grateful to Joseph H. Silverman for pointing out a serious error in the published version. The research of A.~O. was supported by the
UNSW Vice Chancellor's Fellowship and of M.~S. by the Australian Research Council Grant DP130100237.

\end{document}